\tikzset{
    >=stealth,
    every picture/.style={thick},
    graphs/every graph/.style={empty nodes},
}
\tikzstyle{vertex}=[
\tikzstyle{printersafe}=[decoration={snake,amplitude=0pt}]
\newcommand{\rank}{\operatorname{rank}}
\newcommand{\supp}{\operatorname{supp}}
\newcommand{\alg}{\operatorname{alg}}
\newcommand{\pp}{\mathbb{P}}
\newcommand{\qq}{\mathbb{Q}}
\newcommand{\zz}{\mathbb{Z}}
\newcommand{\nn}{\mathbb{N}}
\newcommand{\rr}{\mathbb{R}}
\newcommand{\cc}{\mathbb{C}}
\newcommand{\kk}{\mathbb{K}}
\def\O#1.{\mathcal {O}_{#1}}			
\def\pr #1.{\mathbb P^{#1}}				
\def\af #1.{\mathbb A^{#1}}			
\def\ses#1.#2.#3.{0\to #1\to #2\to #3 \to 0}	
\def\xrar#1.{\xrightarrow{#1}}			
\def\K#1.{K_{#1}}						
\def\bA#1.{\mathbf{A}_{#1}}			
\def\bM#1.{\mathbf{M}_{#1}}				
\def\bL#1.{\mathbf{L}_{#1}}				
\def\bB#1.{\mathbf{B}_{#1}}				
\def\bK#1.{\mathbf{K}_{#1}}			
\def\subs#1.{_{#1}}					
\def\sups#1.{^{#1}}
\DeclareMathOperator{\reg}{reg}
  \newtheorem{introthm}{Theorem}
  \newtheorem{theorem}{Theorem}[section]
  \newtheorem{lemma}[theorem]{Lemma}
  \newtheorem{corollary}[theorem]{Corollary}
  \newtheorem{definition}[theorem]{Definition}
  \newtheorem{example}[theorem]{Example}
  \newtheorem{question}[theorem]{Question}
\newtheorem{remark}[theorem]{Remark}
\theoremstyle{remark}
\numberwithin{equation}{section}
\begin{document}

\title[Polarized endomorphisms of Fano varieties with complements]{Polarized endomorphisms of Fano varieties with complements}

\author[J.~Moraga]{Joaqu\'in Moraga}
\address{UCLA Mathematics Department, Box 951555, Los Angeles, CA 90095-1555, USA
}
\email{jmoraga@math.ucla.edu}

\author[J.I.~Y\'a\~nez]{Jos\'e Ignacio Y\'a\~nez}
\address{UCLA Mathematics Department, Box 951555, Los Angeles, CA 90095-1555, USA
}
\email{yanez@math.ucla.edu}

\author[W. Yeong]{Wern Yeong}
\address{UCLA Mathematics Department, Box 951555, Los Angeles, CA 90095-1555, USA
}
\email{wyyeong@math.ucla.edu}

\subjclass[2020]{Primary 08A35, 14M25;
Secondary 14F35.}
\maketitle

\begin{abstract}
Let $X$ be a Fano type variety
and $(X,\Delta)$ be a log Calabi--Yau pair with $\Delta$ a Weil divisor.
If 
$(X,\Delta)$ admits a polarized endomorphism, then we show that 
$(X,\Delta)$ is a finite quotient of a toric pair.
Along the way, we prove that a klt Calabi--Yau pair $(X,\Delta)$ with standard coefficients
that admits a polarized endomorphism is the quotient of an abelian variety.
\end{abstract}

\setcounter{tocdepth}{1} 
\tableofcontents

\section{Introduction}

A {\em polarized endomorphism} on a variety $X$
is an endomorphism $f\colon X\rightarrow X$ for which
$f^*A\sim mA$ for some ample divisor $A$ on $X$ and $m\geq 2$. 
Although many varieties admit 
interesting endomorphisms, it is expected that varieties 
admitting polarized endomorphisms
have a much more restrictive geometry.
Two examples of such varieties
are toric varieties and abelian  varieties (see Example~\ref{ex1} and Example~\ref{ex3}). 
Furthermore, certain finite quotients of the aforementioned examples admit 
polarized endomorphisms (see Example~\ref{ex2} and Example~\ref{ex4}).
It is a folklore conjecture that a variety $X$ of klt type admitting a polarized endomorphism must be a finite quotient of a toric fibration over an abelian  variety. In the conjecture, it is essential to impose that $X$ has klt type singularities (see Example~\ref{ex:cone-over-elliptic}).
Further, we know that $\qq$-Gorenstein varieties admitting int-amplified endomorphisms are log canonical~\cite{BH93}.
In recent years, there has been a great amount of activity on this topic.
For instance, the folklore conjecture is known in several cases:
for surfaces~\cite{Nak02}, 
for smooth Fano $3$-folds~\cite{MZZ22},
for homogeneous varieties~\cite{PS89}, and 
for klt Calabi--Yau varieties~\cite{Meng-17}.
In~\cite{KT23}, Kawakami and Totaro proved that varieties admitting polarized endomorphisms satisfy Bott vanishing.

A {\em complement} of a variety $X$ 
is a boundary $\Delta$ for which $(X,\Delta)$ is log canonical and $K_X+\Delta\sim_\qq 0$.
More precisely, we say that $\Delta$ is an {\em $N$-complement} of $X$ if $N(K_X+\Delta)\sim 0$, which in particular implies that all coefficients of $\Delta$ are in $\frac{1}{N}\,\zz.$
Recently, there has been a vast activity in the so-called theory of complements (see, e.g.~\cite{Bir19,FM20}).
Motivated by the theory of complements on Fano varieties, we study polarized endomorphisms of Fano varieties that preserve a complement structure. The following is the main result of this article.

\begin{introthm}\label{introthm1}
Let $X$ be a Fano type variety and let $(X,\Delta)$ be a log Calabi--Yau pair with $\Delta$ reduced.
If the pair $(X,\Delta)$ admits a polarized endomorphism,
then $(X,\Delta)$ is a finite quotient of a toric log Calabi--Yau pair.
\end{introthm}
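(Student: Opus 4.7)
My first move would be to replace $f$ by a sufficiently divisible iterate so that $f$ fixes every irreducible component of $\Delta$. Since $f^*A\sim mA$ for some ample $A$ on $X$ and $K_X+\Delta\sim_\qq 0$, the ramification formula together with the equality $f^*(K_X+\Delta)\sim_\qq K_X+\Delta$ forces the ramification divisor of $f$ to be supported on $\Delta$, so no new divisorial data is introduced by iteration. Passing to a $\qq$-factorial dlt modification $\pi\colon (Y,\Delta_Y)\to (X,\Delta)$ (available because $(X,\Delta)$ is lc and $X$ is of Fano type) and iterating once more, I would lift the polarized endomorphism to one on $(Y,\Delta_Y)$ that fixes every stratum of $\Delta_Y$.

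\textbf{Reduction to coregularity zero.} Let $W$ be a minimal log canonical center of $(Y,\Delta_Y)$. By dlt adjunction one obtains a klt log Calabi--Yau pair $(W,\Delta_W)$ whose different has standard coefficients, and some iterate of $f$ restricts to a polarized endomorphism of $(W,\Delta_W)$. By the second main theorem announced in the abstract, such a pair is a finite quotient of an abelian variety. On the other hand, since $X$ is of Fano type, every log canonical center of $(X,\Delta)$ is rationally connected (cf.\ Hacon--McKernan). A rationally connected finite quotient of an abelian variety must be zero-dimensional, so $\dim W=0$ and $(X,\Delta)$ has coregularity zero.

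\textbf{From coregularity zero to a toric quotient.} With a zero-dimensional LC center at hand, I would conclude via a structure theorem for Fano type log CY pairs of coregularity zero. Locally around the zero-stratum the pair is formally toric (by Koll\'ar's structure results on dual complexes of coregularity zero), and the polarized dynamics of $f$ should allow one to promote this local toric structure to a global toric quotient. Equivalently, one argues that the polarized endomorphism forces the complexity of $(X,\Delta)$ in the sense of Brown--McKernan--Svaldi--Zong to vanish, and then a suitable quotient-form refinement of the complexity-zero characterization yields $(X,\Delta)$ as a finite quotient of a toric log Calabi--Yau pair.

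\textbf{Main obstacle.} I expect the technical heart of the argument to be the last step: deducing the finite-quotient-of-toric conclusion rather than the weaker statement that $(X,\Delta)$ is merely crepant birational to a toric pair, which is what a bare coregularity-zero hypothesis typically yields. This requires genuinely exploiting the dynamics of $f$ -- for instance by running an equivariant MMP whose extremal contractions are automatically toric, or by controlling the orbifold fundamental group of the smooth locus through the action of $f$. Secondary technical points are the compatibility of the lift of $f$ with the chosen dlt modification and the tracking of coefficients produced by repeated adjunctions.
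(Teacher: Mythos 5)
There is a genuine gap, and it sits exactly where your argument pivots: the ``reduction to coregularity zero.'' You assert that (i) every log canonical center of $(X,\Delta)$ is rationally connected because $X$ is of Fano type, and (ii) a rationally connected finite quotient of an abelian variety must be zero-dimensional. Claim (ii) is false as stated: the quotient in Theorem~2 is allowed to ramify in codimension one, and $\mathbb{P}^1$ together with the boundary $\tfrac12(\{0\}+\{1\}+\{\lambda\}+\{\infty\})$ is precisely the quotient of an elliptic curve by the involution --- this is the paper's own Example~\ref{ex4}. So from ``$(W,\Delta_W)$ is a finite quotient of an abelian variety'' plus ``$W$ rationally connected'' you cannot conclude $\dim W=0$; you would need the quotient to be quasi-\'etale, which Theorem~2 does not give. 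Claim (i) is also not a general structural fact: for $(\mathbb{P}^2,C)$ with $C$ a smooth cubic, the unique lc center is an elliptic curve. (That pair admits no polarized endomorphism, but you invoke rational connectedness of lc centers as a consequence of Fano type alone, not of the dynamics.) A secondary unaddressed point is the lift of $f$ to a dlt modification: a polarized endomorphism only induces a rational self-map of a birational model, and making it a polarized morphism there is not automatic.

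Even granting coregularity zero, your last step concedes that coregularity zero only yields a crepant birational toric model, not a finite quotient of a toric pair, and the mechanism you gesture at (equivariant MMP, complexity zero) is not supplied --- but that step is the entire content of the theorem (Example~\ref{ex2} is a coregularity-zero pair that is a quotient of, but not itself, a toric pair). The paper's route is different and worth comparing: it first passes to the index one cover, then proves that $\pi_1^{\rm alg}(X^{\rm reg}\setminus\Delta)$ is \emph{virtually abelian} (via boundedness of regional fundamental groups), passes to the corresponding cover $(Y,\Delta_Y)$ (still Fano type), lifts an iterate of $f$ using finite presentability of $\pi_1$, deduces from abelianness of the profinite completion that the lifted endomorphism is \emph{Galois}, places the Galois group inside the maximal torus of ${\rm Aut}(Y,\Delta_Y)$ (a finite extension of a torus by Hu's theorem), and finally uses a multiplicity-divergence argument for quotients of $\mathbb{G}_m$-singularities to force that torus to have full rank, i.e.\ $(Y,\Delta_Y)$ is toric. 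If you want to salvage your outline, the fundamental-group/Galois mechanism is the missing idea that converts the dynamics into a torus action.
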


Theorem~\ref{introthm1} deals with Fano type varieties
with a reduced complement.
It is expected that every variety
that has a polarized endomorphism
admits a complement~\cite{BG17}.
Note that the conditions of Theorem~\ref{introthm1} are satisfied
when $\Delta$ is a $1$-complement
of the Fano type variety $X$.

In the setting of Theorem~\ref{introthm1},
the polarized endomorphism
$f\colon X\rightarrow X$ of the pair $(X,\Delta)$
satisfies \linebreak $f^{-1}(\Delta)=\Delta$
and $\Delta$ is said to be a {\em completely invariant divisor}.
Polarized endomorphisms fixing a divisor
have been extensively studied.
Completely invariant divisors are expected to 
be defined by low degree equations in $X$ (see, e.g.,~\cite{Hor17}).
In~\cite{HN11}, Hwang and Nakayama proved that on a Fano manifold $X$ of Picard rank one that is different from projective space, an endomorphism $f\colon X\rightarrow X$
that is \'etale outside a completely invariant divisor
must be an isomorphism.
In~\cite{MZ20}, Meng and Zhang proved that 
if $X$ is a smooth rationally connected variety that admits a polarized endomorphism $f\colon X\rightarrow X$ 
that is \'etale outside a completely invariant divisor $\Delta$, then $(X,\Delta)$ is a toric pair.
In~\cite{MZ23}, Meng and Zhong proved that if $X$ is a smooth rationally connected variety and $\Delta$ is a reduced divisor, then $(X,\Delta)$ is a toric pair if and only if $X$ admits an int-amplified endomorphism $f$ that is \'etale outside of $\Delta$. 
In summary, Theorem~\ref{introthm1} is already known when $X$ is a smooth Fano type variety.
It is expected that the techniques of~\cite{HN11,MZ20,MZ23}
can prove Theorem~\ref{introthm1} in the case that $X$ is a terminal Fano type variety.
However, we need to introduce some new ideas 
related to the Jordan property of fundamental groups
in order to settle the klt Fano type case.
In Remark~\ref{rem:comp}, we compare our main theorem with the previous results in the literature. 
In Example~\ref{ex2}, we show that the finite quotient in the Theorem~\ref{introthm1} is indeed necessary. 
This example is based on the family of examples by Koll\'ar and Xu~\cite{KX09arxiv} 
in which they show that a Fano variety of Picard number 1 with terminal singularities 
that admits a polarized endomorphism, might not be rational.

Our next theorem goes in a somewhat orthogonal direction.
It is about polarized endomorphisms
of klt Calabi--Yau pairs
with standard coefficients.

\begin{introthm}\label{introthm2}
Let $(X,\Delta)$ be a klt log Calabi--Yau pair with standard coefficients.
If $(X,\Delta)$ admits an int-amplified endomorphism, then $(X,\Delta)$ is a finite quotient of an abelian  variety.
\end{introthm}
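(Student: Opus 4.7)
The plan is to remove the boundary by passing to a global index-one cover, apply the known result for klt Calabi--Yau varieties, and then descend the structure. Since $(X,\Delta)$ has standard coefficients, write $\Delta=\sum_i \frac{m_i-1}{m_i}D_i$ with $m_i\in\zz_{>0}$ and $D_i$ prime. The idea is to construct a finite Galois cover $\pi\colon Y\to X$, ramified along $D_i$ with index exactly $m_i$, such that
\[
K_Y = \pi^*(K_X+\Delta).
\]
Because $(X,\Delta)$ is klt and $\Delta$ has standard coefficients, this cover can be realized as the coarse moduli of an iterated root stack, and the orbifold fundamental group of $(X,\Delta)$ controls it via Kawamata's covering trick. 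Since $K_X+\Delta\sim_\qq 0$, the cover $Y$ is a klt variety with $K_Y\sim_\qq 0$.

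The next step, which I expect to be the main obstacle, is to lift the int-amplified endomorphism $f\colon X\to X$ preserving $(X,\Delta)$ to an int-amplified endomorphism $\tilde f\colon Y\to Y$. The issue is that $f$ preserves the support of $\Delta$ but may permute or map components to one another with multiplicities that must be compatible with the root stack data $(m_i)$. On the orbifold/stack level, preservation of the pair $(X,\Delta)$ is precisely the compatibility needed: $f$ extends to a morphism of the associated Deligne--Mumford stack, and passing to the universal cover of this stack (which factors through $Y$ after a finite étale step) produces $\tilde f$. In practice, one takes the fiber product $Y\times_{X,f} X$ and argues, using that $f^{-1}\Delta$ has the same coefficients as $\Delta$ when computed via orbifold pullback, that an irreducible component of the normalization is isomorphic to $Y$ over $X$, yielding the lift $\tilde f$. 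Int-amplified-ness is preserved because $\pi$ is finite and $\tilde f^*\pi^*A=\pi^*f^*A$ for any ample $A$ on $X$.

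Once $\tilde f$ is constructed, $Y$ is a klt Calabi--Yau variety admitting an int-amplified endomorphism, so by the theorem of Meng \cite{Meng-17} there exists a finite surjective morphism $A\to Y$ from an abelian variety $A$ (in fact $Y$ is a finite quotient of $A$). Composing with $\pi\colon Y\to X$ exhibits $X$ itself as the quotient of $A$ by a finite group $G$, and a direct check shows that under this quotient the orbifold structure $(X,\Delta)$ is exactly the one induced by the ramification divisor of $A\to X$, so $(X,\Delta)$ is a finite quotient of $(A,0)$ as a pair. The two places where care is required are: first, verifying that the lifting argument works at the singular points of $Y$ (which is handled by the klt assumption together with working on an appropriate quasi-étale Galois closure), and second, arranging for the cover to be Galois over $X$ so that the quotient description on $(X,\Delta)$ is genuine; both can be accomplished by replacing $\pi$ by its Galois closure at the start.
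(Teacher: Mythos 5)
Your overall strategy matches the paper's: pass to a global cover killing the boundary, lift the endomorphism, invoke Meng's theorem for klt Calabi--Yau varieties, and descend. However, there is a genuine gap in the descent step. Meng's theorem gives you a finite quasi-\'etale \emph{Galois} cover $A\to Y$ with $A$ abelian, but the composite $A\to Y\to X$ is a composition of two Galois covers and is therefore \emph{not} necessarily Galois over $X$; without Galois-ness, $X$ is merely a finite image of $A$, not a quotient $A/G$, so the conclusion does not follow. Your proposed fix --- ``replacing $\pi$ by its Galois closure at the start'' --- does not address this: the index one cover $\pi\colon Y\to X$ is already cyclic, hence Galois, and the failure of Galois-ness is introduced by the cover $A\to Y$ coming out of Meng's theorem, which you do not control. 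The paper repairs this at the end: it passes to a normal finite-index subgroup $N$ of $\pi_1^{\rm reg}(X,\Delta)$ contained in the subgroup corresponding to $A\to X$, obtains a Galois cover $h'\colon A'\to X$ factoring through $A$, and then must \emph{prove} that $A'$ is again abelian. This is done by a ramification computation: writing ${h'}^*(K_X+\Delta)=K_{A'}+E$ with $E\geq 0$ and $\phi^*K_A=K_{A'}-R$ with $R\geq 0$ the ramification divisor of $\phi\colon A'\to A$, one gets $E+R=0$, hence $\phi$ is quasi-\'etale, hence \'etale since $A$ is smooth, hence an isogeny. That argument (or an equivalent one) is missing from your write-up and cannot be waved away as a ``direct check.''

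Two smaller points. First, the correct construction of $Y$ is the cyclic index one cover attached to the torsion class $K_X+\Delta$ (using $N(K_X+\Delta)\sim 0$ and the standard-coefficient hypothesis to see that the ramification index over $D_i$ is exactly $m_i$); the orbifold fundamental group of $(X,\Delta)$ by itself does not produce such a cover, and Kawamata's covering trick introduces extra branching. Second, your lifting step is asserted rather than proved: in general the pullback cover $Y\times_{X,f}X$ need not be isomorphic to $Y$ over $X$, and the paper's Theorem~\ref{thm:lifting-int-amp} only produces a lift of an iterate $f^n$, $n\gg 0$, using that the finitely presented group $\pi_1(X^{\rm reg}\setminus\Delta)$ has only finitely many subgroups of a given index. (For the cyclic index one cover one can alternatively argue directly, since $f^*(K_X+\Delta)=K_X+\Delta$ as divisors forces $f^*s=\lambda s$ for the defining section $s$, but some such argument must be supplied.) Passing to an iterate is harmless for the statement, but the justification is a necessary ingredient.
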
 

In~\cite{Yos21}, Yoshikawa proves a version of the previous theorem that requires $X$ being $\qq$-factorial but allows more general coefficients for $\Delta$ in the log Calabi--Yau klt pair $(X,\Delta)$.
Theorem~\ref{introthm2} applies to the setting of Fano varieties with complements
as explained in Example~\ref{ex4}.
The finite quotient in the statement of Theorem~\ref{introthm2} is indeed necessary (see Example~\ref{ex4}).
In summary, we can understand 
polarized endomorphisms 
of Fano varieties with complements
in two cases: 
when the complement is reduced, 
and when the complement is klt and has standard coefficients.
All the theorems in the introduction work for int-amplified endomorphisms (see Definition~\ref{def:amp}). 
We stated them in terms of polarized endomorphisms for the sake of exposition.

\subsection{Sketch of the proof}

In this subsection, we sketch the proof of Theorem~\ref{introthm1} and Theorem~\ref{introthm2}.
The proof of the first theorem uses techniques from the Jordan property for Fano varieties~\cite{BFMS20,Mor21}, automorphisms of log Calabi--Yau pairs~\cite{Hu18}, and singularities of $\mathbb{T}$-varieties~\cite{LLM19}.

Let $f\colon (X,\Delta)\rightarrow (X,\Delta)$ be a polarized endomorphism of the pair.
The proof consists of three steps.
First, we show that the fundamental group
$\pi_1^{\rm alg}(X\setminus \Delta)$ is virtually abelian  (see Theorem~\ref{thm:virt-ab-fun}).
Hence, there is a cover of log pairs
$g\colon (Y,\Delta_Y)\rightarrow (X,\Delta)$
for which the algebraic fundamental group
of $Y\setminus \Delta_Y$ is an abelian  group. 
In the setting of the theorem,
the covering variety $Y$ is still Fano type (see Lemma~\ref{lem:FT-cover}).
Secondly, we prove that some iteration of the polarized endomorphism of  $(X,\Delta)$
lifts to a polarized
 endomorphism of the log pair $(Y,\Delta_Y)$ (see Theorem~\ref{thm:lifting-int-amp}).
 For this step, we use the fact that $\pi_1(X^{\rm reg}\setminus \Delta)$
 is a finitely presented group.
 Thus, we have a commutative diagram where the horizontal arrows are polarized endomorphisms:
 \[
 \xymatrix{
(Y,\Delta_Y)\ar[d]_-{g}\ar[r]^-{f_Y} & (Y,\Delta_Y)\ar[d]^-{g} \\
(X, \Delta)\ar[r]^-{f^n} &  (X,\Delta). 
 }
 \]
Since $\pi_1^{\rm alg}(Y\setminus \Delta_Y)$ is abelian, we are reduced to the study of Galois polarized endomorphisms. 
By Theorem~\ref{thm:aut-CY}, we know that ${\rm Aut}(Y,\Delta_Y)$ is a finite extension of an algebraic torus.
In order to obtain the previous statement, we use the fact that $Y$ is Fano type, or at least rationally connected.
Then, we argue that the group $G\leqslant {\rm Aut}(Y,\Delta_Y)$ corresponding to the Galois endomorphism $f_Y$
is contained in the connected component of ${\rm Aut}(Y,\Delta_Y)^0$, which is an algebraic torus.
Finally, we turn to use the theory of $\mathbb{T}$-varieties.
If a maximal torus of ${\rm Aut}(Y,\Delta_Y)$ has rank less than $\dim Y$, then we argue that the quotient by $G$ will make the singularities of $Y$ worse. For instance, certain multiplicities must increase (see Lemma~\ref{lem:limit-mult}). The previous allows us to argue that either $(Y,\Delta_Y)$ is toric or the quotient by $G$ is not an endomorphism (see Theorem~\ref{thm:int-amplified-toric-quot-CY}). This finishes the proof.

In the case of Theorem~\ref{introthm2}, using the same technique as in the previous proof, we may lift the polarized endomorphism to the index one cover of $K_X+\Delta$.
As $\Delta$ has standard coefficients, the index one cover is a klt Calabi--Yau variety.
Then, we can apply~\cite[Theorem 1.9.(1)]{Meng-17} to deduce that $Y$ is a Q-abelian  variety, i.e., the finite quasi--\'etale quotient of an abelian  variety.
Then, by considering the Galois closure of the finite morphism $Y\rightarrow (X,\Delta)$, we conclude that $(X,\Delta)$ is the finite quotient of an abelian  variety. 
Clearly, in this case, the quotient may be ramified in codimension one, meaning that $\Delta$ may be non-trivial.

\subsection*{Acknowledgements}
The authors would like to thank Rohan Joshi, Sheng Meng, and Burt Totaro for many useful discussions and comments. 

\section{Preliminaries}

We work over the field $\cc$ of complex numbers.
The {\em  rank} of a group $G$ is the least number of generators of $G$.
We say that two $\rr$-divisors $A$ and $B$ on a normal variety $X$ are \textit{linearly equivalent} and write $A\sim B$ if $A-B$ is the divisor of a rational function on $X$. In particular, this implies that $A-B$ has integer coefficients. We write $(X;x)$ for the germ of an algebraic variety $X$ at a closed point $x\in X$.

In this section, we prove some preliminary results regarding Fano varieties, int-amplified endomorphisms, and torus actions.
For the singularities of the MMP, we refer the reader to~\cite{Kol13}, 
for Fano type varieties we refer the reader to~\cite{Mor22}, and 
for toric geometry, we refer the reader to~\cite{CLS11}.

\begin{definition}
{\em 
    An endomorphism $f\colon X\rightarrow X$ is a \emph{polarized endomorphism} if $f^*A\sim mA$ for some $m\geq 2$ and ample divisor $A$.
    An endomorphism $f\colon X\rightarrow X$ is said to be {\em int-amplified} if $f^*A-A$ is ample
    for certain ample Cartier divisor $A$ in $X$.
}
\end{definition}

\begin{definition}\label{def:amp}
{\em
    We say that a pair $(X,\Delta)$ 
    admits a \emph{polarized endomorphism} if there is a polarized endomorphism $f: X \rightarrow X$ such that $f^*(K_X+\Delta)=K_X+\Delta.$
    We say that a pair $(X,\Delta)$ admits a {\em int-amplified endomorphism} if there is an int-amplified endomorphism $f\colon X\rightarrow X$ such that
    $f^*(K_X+\Delta)=K_X+\Delta$.
    }
\end{definition}

In the previous definition, we are fixing a Weil divisor $K_X$ that represents the class of the canonical divisor and we ask the equality $f^*(K_X+\Delta)=K_X+\Delta$ to hold as an equality of divisors, not just of classes.

We recall the definition of the 
orbifold fundamental group
of a log pair.

\begin{definition}
{\em 
Let $(X,\Delta)$ be a pair with standard coefficients, i.e., coefficients of the form $1 - \frac{1}{m}$ for $m\in \nn$. Let $D_1,\ldots, D_l$ be the prime components of $\Delta$, and $\Delta = \sum_{i=1}^l \left(1 - \frac{1}{m_i}\right) D_i$. We define the {\em orbifold fundamental group} of the pair $(X,\Delta)$ as
\[
    \pi_1(X,\Delta) := \pi_1(X^{\mathrm{reg}}\setminus \supp(\Delta))/N
\]
where $N$ is the normal subgroup generated by the elements $\gamma_i^{m_i}$, where $\gamma_i$ is a loop around $D_i$.
}
\end{definition} 

The following lemma allows us to control 
whether certain finite cover of a Fano type variety is again of Fano type.
This lemma is well-known to the experts.

\begin{lemma}\label{lem:FT-cover}
Let $X$ be a Fano type variety.
Let $(X,\Delta)$ be a log Calabi--Yau pair with $K_X+\Delta\sim 0$.
Let $f\colon Y\rightarrow X$ be a finite cover 
with branched divisor contained in the support of $\Delta$.
Then, $Y$ is a Fano type variety.
\end{lemma}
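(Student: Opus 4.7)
The plan is to transfer a klt Fano-type boundary from $X$ to $Y$ by pulling back along $f$, using a small interpolation against $\Delta$ to keep the pullback effective in the presence of ramification. Once I find an effective $\qq$-boundary $\Gamma_Y$ on $Y$ such that $(Y,\Gamma_Y)$ is klt and $-(K_Y+\Gamma_Y)$ is ample, the Fano-type conclusion is immediate.

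First, I would invoke the Fano-type hypothesis on $X$ to pick an effective $\qq$-boundary $\Gamma_0$ with $(X,\Gamma_0)$ klt and $H:=-(K_X+\Gamma_0)$ ample. For $\epsilon\in(0,1)$, set $\Gamma_\epsilon:=(1-\epsilon)\Gamma_0+\epsilon\Delta$. Since $(X,\Gamma_0)$ is klt and $(X,\Delta)$ is log canonical, the pair $(X,\Gamma_\epsilon)$ is klt for every $\epsilon<1$, and using $K_X+\Delta\sim 0$ we get
\[
-(K_X+\Gamma_\epsilon)\sim_{\qq}(1-\epsilon)H,
\]
which is ample. Next I would pull back to $Y$ and define $\Gamma_Y$ by the relation $K_Y+\Gamma_Y=f^*(K_X+\Gamma_\epsilon)$, so $\Gamma_Y=f^*\Gamma_\epsilon-R_f$ with $R_f$ the ramification divisor. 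Then $-(K_Y+\Gamma_Y)=(1-\epsilon)f^*H$ is the pullback of an ample divisor under a finite morphism, hence ample on $Y$; and provided $\Gamma_Y$ is effective, the klt property of $(Y,\Gamma_Y)$ follows from that of $(X,\Gamma_\epsilon)$ by the standard behavior of log discrepancies under finite surjective morphisms.

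The main obstacle is effectiveness of $\Gamma_Y$. For a prime divisor $E\subset Y$ over $D=f(E)\subset X$ with ramification index $e_E$,
\[
\coeff_E(\Gamma_Y)=e_E\,\coeff_D(\Gamma_\epsilon)-(e_E-1).
\]
If $E$ is unramified, this is $\coeff_D(\Gamma_\epsilon)\geq 0$ automatically. If $E$ is ramified, then $D$ lies in the branch locus, which by hypothesis is contained in $\supp\Delta$, so $\coeff_D(\Delta)>0$. The hypothesis that $f$ is an honest finite cover of the log pair $(X,\Delta)$, together with the lc condition for $(X,\Delta)$ (transported to the effective lc pair $(Y,f^*\Delta-R_f)$), forces $\coeff_D(\Delta)\geq 1-1/e_E$ along each branch component. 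Since $\coeff_D(\Gamma_\epsilon)\to \coeff_D(\Delta)$ as $\epsilon\to 1^-$ and the branch divisor has only finitely many components, choosing $\epsilon$ close enough to $1$ makes every coefficient of $\Gamma_Y$ nonnegative. This produces the required klt boundary with ample log anticanonical, proving that $Y$ is Fano type.

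The hard part is really just the effectiveness bookkeeping: one has to verify that the compatibility between the ramification indices of $f$ and the coefficients of $\Delta$ — which is implicit in the phrase ``cover with branched divisor contained in $\supp\Delta$'' — is tight enough for the interpolation to work for some $\epsilon<1$. The ampleness and klt inputs are formal, and no new geometric idea is needed beyond the choice of $\Gamma_0$ supplied by the Fano-type hypothesis.
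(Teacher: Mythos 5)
Your overall strategy is the same as the paper's: interpolate the klt Fano-type boundary with $\Delta$, pull back crepantly along $f$, and check effectiveness of $\Gamma_Y$ component by component using the ramification indices. The ampleness and klt transfers are fine. The problem is the justification of the one step you correctly identify as the crux. You claim that log canonicity of $(X,\Delta)$, ``transported'' to $(Y,f^*\Delta-R_f)$, forces $\coeff_D(\Delta)\geq 1-1/e_E$ along each branch component. This implication is false: nothing in the lc condition ties the coefficients of $\Delta$ to the ramification indices of an arbitrary finite cover branched inside $\supp\Delta$. For instance, $(\pp^1,\tfrac12(p_1+p_2+p_3+p_4))$ is klt with $K+\Delta\sim_{\qq}0$, and a degree-$3$ cyclic cover totally ramified over the four points has branch divisor inside $\supp\Delta$ but $\coeff_{p_i}(\Delta)=\tfrac12<1-\tfrac13$; the cover is a genus-$2$ curve, so the conclusion of the lemma actually fails there. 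The reason this does not contradict the lemma is the paper's convention that $\sim$ denotes \emph{integral} linear equivalence: $K_X+\Delta\sim 0$ forces $\Delta$ to be a Weil divisor, hence reduced, so $\coeff_D(\Delta)=1$ on every component of the branch locus. That, and not log canonicity, is the input you need.

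Once you use $\coeff_D(\Delta)=1$, your interpolation closes, but note that you also need the inequality to be \emph{strict}: if you only knew $\coeff_D(\Delta)\geq 1-1/e_E$ with equality possible, then $\coeff_D(\Gamma_\epsilon)=(1-\epsilon)\coeff_D(\Gamma_0)+\epsilon\coeff_D(\Delta)$ converging to $\coeff_D(\Delta)$ from below would never reach the threshold $1-1/e_E$ for $\epsilon<1$. With $\coeff_D(\Delta)=1>1-1/e_E$ and only finitely many branch components (hence bounded $e_E$), any $\epsilon$ sufficiently close to $1$ works, which is exactly the paper's choice (written with $\epsilon$ replaced by $1-\epsilon$).
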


\begin{proof}
Let $D$ be a boundary divisor on $X$ 
for which $(X,D)$ is klt
and $-(K_X+D)$ is big and nef.
For each prime divisor $P$ on $X$
let $m_P$ be the ramification
index of $f$ at $P$.
For $\epsilon>0$ small enough
the pair
\[
(X,(1-\epsilon)\Delta+\epsilon D)
\]
is klt 
and $-(K_X+(1-\epsilon)\Delta+\epsilon D)$ is big and nef. 
Set $\Gamma:=(1-\epsilon)\Delta+\epsilon D$.
Further, for every prime divisor $P$ on $X$ we may assume
\begin{equation}\label{eq:mult-div}
{\rm coeff}_P(\Gamma) > 1-\frac{1}{m_P}.
\end{equation} 
Write
\[
K_Y+\Gamma_Y = f^*(K_X+\Gamma).
\]
By inequality~\eqref{eq:mult-div}, the divisor $\Gamma_Y$ is effective.
Thus, $(Y,\Gamma_Y)$ is a klt log pair
and $-(K_Y+\Gamma_Y)$ is big and nef.
So $Y$ is of Fano type.
\end{proof}

\begin{definition}
 {\em
Let $(X,\Delta)$ be a log pair.
We say that a finite morphism
$f\colon (X,\Delta)\rightarrow (Y,\Delta_Y)$ is of
{\em endomorphism type} if there exists an isomorphism
of log pairs
$\phi\colon (Y,\Delta_Y)\rightarrow (X,\Delta)$ such that the composition
$\phi\circ f\colon (X,\Delta)\rightarrow (X,\Delta)$ is an endomorphism of log pairs.
If the composition is int-amplified, we say that $f\colon (X,\Delta)\rightarrow (Y,\Delta_Y)$ is of {\em int-amplified type}.
}
\end{definition}

\begin{lemma}\label{lem:int-amplified-on-boundary}
Let $(X,\Delta)$ be a log Calabi--Yau with $K_X+\Delta\sim 0$.
Let $f\colon (X,\Delta)\rightarrow (X,\Delta)$ be an int-amplified endomorphism.
For every prime component $S\subseteq \lfloor \Delta\rfloor$, there is a positive integer 
$i\geq 1$ for which $f^i(S)=S$ and
$f^i|_{S^\vee}\colon (S^\vee,\Delta_{S^\vee})\rightarrow (S^\vee,\Delta_{S^\vee})$ is an int-amplified endomorphism.
Here, the log pair $(S^\vee,\Delta_{S^\vee})$ is induced by adjunction of $(X,\Delta)$
to the normalization $S^\vee$ of $S$.
\end{lemma}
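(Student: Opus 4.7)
The plan is first to show that $f$ permutes the prime components of $\lfloor\Delta\rfloor$, so that an iterate $f^i$ fixes a given $S$, and then to lift $f^i|_S$ to the normalization $S^\vee$ and verify both int-amplification and preservation of the different. The first step starts by translating $f^*(K_X+\Delta)=K_X+\Delta$ into a statement about $f^*\Delta$ alone: since $f$ is finite and surjective between normal varieties in characteristic zero, the Riemann--Hurwitz identity $K_X=f^*K_X+R_f$ holds with $R_f$ the log ramification divisor of $f$, and subtracting yields $f^*\Delta=\Delta+R_f$ as Weil divisors. Fix a prime component $S$ of $\lfloor\Delta\rfloor$, let $T=f(S)$, and let $e\ge 1$ be the ramification index of $f$ along $S$. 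Comparing coefficients of $S$ on both sides gives
\[
e\cdot\coeff_T(\Delta)=\coeff_S(\Delta)+(e-1)=e,
\]
so $\coeff_T(\Delta)=1$, and the same equation applied to any prime divisor $S'$ with $f(S')=T$ forces $\coeff_{S'}(\Delta)=1$. Hence $f$ induces a bijection on the finite set of prime components of $\lfloor\Delta\rfloor$, and some iterate $f^i$ fixes $S$.

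Next, the restriction $f^i|_S\colon S\to S$ is finite and surjective, so by the universal property of normalization it lifts uniquely to $\bar g\colon S^\vee\to S^\vee$ with $\nu\circ\bar g=f^i|_S\circ\nu$, where $\nu\colon S^\vee\to S$ is the normalization. For int-amplification, note that $f^i$ is itself int-amplified: if $f^*A-A$ is ample with $A$ ample Cartier, then by induction $(f^n)^*A-A=(f^{n-1})^*(f^*A-A)+((f^{n-1})^*A-A)$ is a sum of two ample divisors. Pick such an $A$ with $(f^i)^*A-A$ ample, and set $B:=\nu^*(A|_S)$, which is ample on $S^\vee$. Commutativity then yields
\[
\bar g^*B-B=\nu^*\bigl(((f^i)^*A-A)|_S\bigr),
\]
which is ample because ampleness is preserved by restriction to closed subvarieties and by finite pullback. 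Hence $\bar g$ is an int-amplified endomorphism of $S^\vee$.

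Finally, preservation of the log structure on $S^\vee$ follows from the very definition of the different, which gives $K_{S^\vee}+\Delta_{S^\vee}=\pi^*(K_X+\Delta)$ for $\pi\colon S^\vee\to X$ the composition of $\nu$ with the inclusion $S\hookrightarrow X$. Since $\pi\circ\bar g=f^i\circ\pi$, we compute
\[
\bar g^*(K_{S^\vee}+\Delta_{S^\vee})=\pi^*(f^i)^*(K_X+\Delta)=\pi^*(K_X+\Delta)=K_{S^\vee}+\Delta_{S^\vee},
\]
which is the required identity for $\bar g$ to be an endomorphism of the log pair $(S^\vee,\Delta_{S^\vee})$. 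The main obstacle is making the identity $f^*\Delta=\Delta+R_f$ precise on the possibly singular normal variety $X$, so that the coefficient comparison at the generic point of $S$ really delivers the clean ramification equation; this is the technical fulcrum on which the rest of the argument rests.
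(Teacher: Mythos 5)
Your proposal is correct and follows essentially the same route as the paper: find an iterate fixing $S$, pull back an ample witness of int-amplification through the normalization and inclusion, and invoke adjunction for the log structure. The only difference is that you justify in detail (via Riemann--Hurwitz and the coefficient comparison $e\cdot\coeff_{f(S)}(\Delta)=\coeff_S(\Delta)+(e-1)$) why $f$ permutes the components of $\lfloor\Delta\rfloor$, a point the paper simply asserts; this is a genuine and welcome addition, since a mere self-map of a finite set need not return to $S$ under iteration.
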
 

\begin{proof}
The divisor $\lfloor \Delta\rfloor$
has finitely many components so 
and $f$ induces a permutation of these.
Thus, for some $i\geq 1$, we have 
$f^i(S)=S$.
Therefore, we have a commutative diagram:
\[
\xymatrix{
S^\vee\ar[d]^{\mu} \ar[r]^-{f^i|_{S^\vee}} & S^\vee\ar[d]^{\mu} \\
S\ar@{^{(}->}[d]^-{j} \ar[r] & S\ar@{^{(}->}[d]^-{j} \\
X \ar[r]^-{f^i} & X.\\
}
\]
In the previous diagram,
the morphism
$\mu\colon S^\vee\rightarrow S$ is the normalization morphism
and $j\colon S\hookrightarrow X$ is the inclusion of $S$.
Since $f^i$ is int-amplified, there is an ample divisor $A$ on $X$ for which
${f^i}^*A - A$ is ample.
Let $H:=\mu^*j^*A$. Note that $H$ is an ample divisor.
By the commutativity of the previous diagram, we have 
\[
(f^i|_{S^\vee})^*H - H = 
(f^i|_{S^\vee})^*\mu^*j^*A - \mu^*j^*A =
\mu^*j^*{f^i}^*A - \mu^*j^*A = 
\mu^*j^*({f^i}^*A - A).
\]
This implies that $f^i|_{S^\vee}$ is int-amplified.
By adjunction, we conclude that 
$f^i|_{S^\vee}\colon (S^\vee,\Delta_{S^\vee})\rightarrow (S^\vee,\Delta_{S^\vee})$ is a finite morphism of log pairs.
Henceforth, the morphism
$f^i|_{S^\vee} \colon (S^\vee,\Delta_{S^\vee})\rightarrow (S^\vee,\Delta_{S^\vee})$ is an int-amplified
endomorphism of the log Calabi--Yau pair $(S^\vee,\Delta_{S^\vee})$.
\end{proof}

The following lemma is well-known.
It states that on a normal variety,
only finitely many multiplicities
at closed points occur.

\begin{lemma}\label{lem:only-finite-mult}
Let $X$ be a normal variety.
Then $\mu(X;x)$ only takes finitely many possible values on closed points $x$ of $X$.
\end{lemma}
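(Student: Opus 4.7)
The plan is to deduce the finiteness of the set of multiplicity values from the upper semi-continuity of the Hilbert--Samuel multiplicity combined with the Noetherianity of $X$. The key input I would invoke is the classical theorem of Bennett (with refinements by Hironaka and others) stating that for an excellent Noetherian scheme $X$, and in particular for any complex algebraic variety, the function $x \mapsto \mu(X;x)$ is upper semi-continuous on $X$ in the Zariski topology. Equivalently, for every integer $n \geq 1$, the locus
\[
Z_n := \{ x \in X \mid \mu(X;x) \geq n \}
\]
is Zariski closed in $X$.

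Granting upper semi-continuity, the remainder of the argument is a short Noetherian-induction. Since $X$ is Noetherian, the descending chain of closed subsets
\[
Z_1 \supseteq Z_2 \supseteq Z_3 \supseteq \cdots
\]
stabilizes, say $Z_N = Z_{N+1} = Z_{N+2} = \cdots$ for some $N \geq 1$. If $Z_N$ were nonempty, then any closed point $x \in Z_N$ would satisfy $\mu(X;x) \geq n$ for every $n \geq N$, contradicting the fact that the Hilbert--Samuel multiplicity of a Noetherian local ring is finite. Hence $Z_N = \varnothing$, which means $\mu(X;x) < N$ for every closed point $x$ of $X$. Since multiplicities are positive integers, this gives at most $N-1$ possible values.

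The main, and essentially only nontrivial, obstacle is the upper semi-continuity of the multiplicity function, which is a nonformal input from commutative algebra. Once this is cited from the literature, the rest of the proof is purely formal Noetherian-induction, in line with the authors' framing of the lemma as well-known. Note that normality of $X$ plays no role in this argument; the statement in fact holds for any Noetherian variety.
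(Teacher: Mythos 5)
Your proof is correct and follows essentially the same route as the paper's: both arguments rest on the Zariski-closedness of the loci $\{x \mid \mu(X;x)\geq m\}$ (upper semi-continuity of multiplicity) together with the Noetherian descending chain condition. You in fact spell out more carefully than the paper does why the stabilized closed set must be empty (a point in it would have infinite multiplicity), and you correctly observe that normality is not needed.
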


\begin{proof}
The multiplicity $\mu(X;x)$ at every closed point $x\in X$ is a positive integer.
On the other hand, for every $m$ the set 
$\{x\in X \mid \mu(X;x)\geq m\}$ is a closed subvariety of $X$.
By Noetherian induction, for some $m_0$ the set $\{x\in X \mid \mu(X;x)\geq m_0\}$ is empty.
\end{proof}

The following lemma states that,
under some mild conditions,
finite quotients of $\mathbb{G}_m$-singularities have arbitrarily large multiplicities.

\begin{lemma}\label{lem:limit-mult}
let $(X;x)$ be a normal singularity and 
$\mathbb{G}_m\leqslant {\rm Aut}(X;x)$ be a one-dimensional torus.
Assume that there exists a $\mathbb{G}_m$-equivariant resolution $Y\rightarrow X$ and a prime exceptional divisor $E\subset Y$ on which $\mathbb{G}_m$ acts as the identity.
For each $k\geq 1$, let $\mu_k\leqslant \mathbb{G}_m$ be the subgroup of $k$-roots of unity
and let $(X_k;x_k)$ be the quotient of $(X;x)$ by $\mu_k$. Then, we have 
\[
\lim_{k\rightarrow \infty} \mu(X_k;x_k)=\infty.
\]
\end{lemma}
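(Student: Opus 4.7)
The plan is to use the $\mathbb{G}_m$-action to identify $\mathcal{O}_{X_k,x_k}$, after a slice reduction, with a Veronese-type subring of a positively graded local ring, and then to use the fact that Hilbert--Samuel multiplicities of Veronese subrings grow polynomially in the Veronese index.

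First I would relate $\mathbb{G}_m$-weights to the valuation $\ord_E$. Let $u$ be a uniformizer for the DVR $\mathcal{O}_{Y,\eta_E}$. Since $E$ is $\mathbb{G}_m$-invariant, $u$ is a $\mathbb{G}_m$-eigenvector of some weight $w_0\in\mathbb{Z}$, with $w_0\neq 0$: otherwise $\mathbb{G}_m$ would act trivially on a formal neighborhood of $\eta_E$ and hence on the connected $Y$. Up to replacing the action by its inverse we may take $w_0>0$. For any $\mathbb{G}_m$-eigenvector $g\in\mathcal{O}_{X,x}$ of weight $w$, writing $f^*g=u^m h$ near $\eta_E$ with $h$ a unit and using that $h|_E$ is a nonzero $\mathbb{G}_m$-invariant function on $E$ (of weight $0$), one gets $w=m w_0$ and $\ord_E(g)=w/w_0$. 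Thus every weight in $\mathcal{O}_{X,x}$ is a non-negative multiple of $w_0$.

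Next I would reduce to the case where $x$ is an isolated $\mathbb{G}_m$-fixed point. Let $Z:=f(E)$, a $\mathbb{G}_m$-pointwise-fixed subvariety through $x$; a $\mathbb{G}_m$-invariant (formal) transverse slice $(S;x)\hookrightarrow(X;x)$ to $Z$ carries a $\mathbb{G}_m$-action with $x$ as isolated fixed point, and a $\mathbb{G}_m$-trivial exceptional divisor over $x$ obtained by intersecting $E$ with the strict transform of $S$ in $Y$. Moreover, since locally $X\cong Z\times S$ (formally) with $\mathbb{G}_m$ acting only on $S$, the $\mu_k$-quotient decomposes as $Z\times(S/\mu_k)$, giving $\mu(X_k;x_k)=\mu(Z;x)\cdot\mu(S_k;x_k)$. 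So it suffices to show $\mu(S_k;x_k)\to\infty$, and we may assume $f(E)=\{x\}$. Choosing a $\mathbb{G}_m$-invariant affine neighborhood of $x$ via Sumihiro with graded coordinate ring $A=\bigoplus_{m\geq 0}A_{mw_0}$, every $g\in\mathfrak{m}_x$ vanishes on $f^{-1}(x)\supseteq E$, hence has $\ord_E(g)\geq 1$ and weight $\geq w_0$; so $A_0\cap\mathfrak{m}_x=\{0\}$ and $A_0=\mathbb{C}$. Thus $A$ is a finitely generated positively-graded $\mathbb{C}$-algebra of Krull dimension $d=\dim X\geq 2$, and setting $L:=\lcm(k,w_0)$ one has $\mathcal{O}_{X_k,x_k}=A^{\mu_k}=\bigoplus_{L\mid w}A_w=:A^{(L)}$ with irrelevant ideal $\mathfrak{m}^{(L)}=\bigoplus_{L\mid w,\,w\geq L}A_w$.

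Finally, since $\dim A=d$, the Hilbert series of $A$ has a pole of order $d$ at $t=1$, so standard quasi-polynomial bounds will yield $\sum_{j=0}^{n-1}\dim_{\mathbb{C}} A_{jL}\geq c\,L^{d-1}n^d$ for $L\gg 0$ and $n$ large, with $c>0$ depending only on $A$. Combined with $(\mathfrak{m}^{(L)})^n\subseteq\bigoplus_{L\mid w,\,w\geq nL}A_w$, one gets
\[
\ell\bigl(A^{(L)}/(\mathfrak{m}^{(L)})^n\bigr)\;\geq\;\sum_{j=0}^{n-1}\dim_{\mathbb{C}} A_{jL}\;\geq\;c\,L^{d-1}n^d,
\]
so $\mu(X_k;x_k)=d!\lim_{n\to\infty}\ell/n^d\geq d!\,c\,L^{d-1}\geq d!\,c\,k^{d-1}\to\infty$, since $L\geq k$ and $d\geq 2$. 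The hard part will be the slice reduction, i.e., establishing the formal product decomposition $X\cong Z\times S$ with the corresponding multiplicity identity; the remainder is standard weight analysis and Hilbert series bookkeeping.
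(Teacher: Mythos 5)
Your steps (1) and (3) are essentially sound: the weight computation via $\ord_E$ correctly shows that the grading on $\mathcal{O}_{X,x}$ is non-negative (a point the paper's proof takes for granted when invoking Watanabe), and in the case $f(E)=\{x\}$, where $A_0=\mathbb{C}$, the Veronese/Hilbert-series estimate does give $\mu(X_k;x_k)\gtrsim k^{\dim X-1}\to\infty$. The genuine gap is step (2), the slice reduction, and it is not merely ``the hard part'': the asserted formal decomposition $(X;x)\cong(Z;x)\times(S;x)$ with $\mathbb{G}_m$ acting only on the second factor is false in general. For singular $X$, a pointwise-fixed subvariety $Z\subseteq X^{\mathbb{G}_m}$ need not admit a formally trivial neighborhood, even at a general point of $Z$, let alone at the distinguished point $x$. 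In the Altmann--Hausen/Watanabe picture $X\cong\operatorname{Spec}\bigl(\bigoplus_{m\geq0}H^0(Y,\mathcal{O}_Y(mD))\bigr)$ with $Y\to Z$ projective, $X$ is a fiberwise cone over $Z$ with fiber the cone over $(Y_z,D|_{Y_z})$, and this is a product only when that polarized family is trivial; already $\{x^2+y^2+tz^2=0\}$ with weights $(1,1,1,0)$ fails local triviality along its fixed $t$-axis at $t=0$. Since $A_0=\mathbb{C}$ fails exactly when $f(E)$ is positive-dimensional, this is precisely the case your step (3) cannot reach on its own, so the gap is load-bearing. (Even granting some invariant slice, you would still owe the normality of $S$, the multiplicativity of $\mu$, and the fact that $E\cap\widetilde{S}$ is a prime exceptional divisor of a resolution of $S$.)

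The paper avoids any transverse localization: it presents $X$ globally as above via Watanabe's theorem, identifies $X_k$ with the $k$-th Veronese of the section ring, and bounds $\mu(X_k;x_k)$ from below by $\dim_{\mathbb{C}}H^0\bigl(Y_{z_0},\mathcal{O}(kmD)|_{Y_{z_0}}\bigr)$, where $Y_{z_0}$ is the fiber over the image $z_0$ of $x$ in $Z$; this fiber is positive-dimensional precisely because the divisor $E$ exists, and the only input is surjectivity of restriction of sections for $k\gg0$ (Serre vanishing), which substitutes for your local triviality. If you want to keep your Hilbert-series bookkeeping, the correct repair is to run it relatively over $A_0$: estimate $\dim_{\mathbb{C}}\bigl(A_{jL}/\mathfrak{m}_{z_0}A_{jL}\bigr)$ by sections on the fiber $Y_{z_0}$ rather than $\dim_{\mathbb{C}}A_{jL}$ itself, which is exactly the paper's argument.
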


\begin{proof}
The statement is local, so we may replace $X$ with a $\mathbb{G}_m$-equivariant affine neighborhood of $x\in X$.
By~\cite[Theorem 1]{Wat81}, we may find a semiprojective variety $Y$ and a $\qq$-ample divisor $D$ on $Y$ for which 
\[
X_0:={\rm Spec}\left(
\bigoplus_{m\geq 0}H^0(Y,\mathcal{O}_Y(mD) 
\right) 
\]
is equivariantly isomorphic to $X$.
Then, the variety $X_k$ is equivariantly isomorphic to
\[
X_{k,0}:={\rm Spec}\left( 
\bigoplus_{m\geq 0} 
H^0(Y,\mathcal{O}_Y(mkD) 
\right).
\]
For each $k$, the finite quotient 
$X_0\rightarrow X_{k,0}$ is induced by the graded inclusion of algebras.
Let $Z:={\rm Spec}(H^0(Y,\mathcal{O}_Y))$ so $Y$ is projective over $Z$.
The projective morphism $Y\rightarrow Z$ is not the identity; otherwise, no higher equivariant birational model of $X_0$ admits a divisor on which $\mathbb{G}_m$ acts as the identity.
Let $\widetilde{X}_0$ be the relative spectrum over $X$ of the sheaf 
$\bigoplus_{m\geq 0}\mathcal{O}_Y(mD)$.
Then, we have an equivariant projective birational morphism $\widetilde{X}_0\rightarrow X_0$ that contracts the only divisor on which $\mathbb{G}_m$ acts as the identity.
Let $x_0\in \widetilde{X}_0$ be the closed invariant point corresponding to $x$.
Let $x_{k,0}$ be the image of $x_0$ in $X_{k,0}$.
Let $z_0$ be the image of $x_{k,0}$ in $Z$.
Then, the maximal ideal of $x_{k,0}$ equals
\[
\mathfrak{m}_k:=m_{z_0}\oplus 
\bigoplus_{m\geq 1}  H^0(Y,\mathcal{O}_Y(kmD)),
\]
where $m_{z_0}$ is the maximal ideal of $z_0$ in $Z$.
For $k$ large enough, the homomorphism
\begin{equation}\label{eq:surj-1}
H^0(Y,\mathcal{O}_Y(mkD)) 
\rightarrow 
H^0(Y_{z_0},\mathcal{O}_{Y_{z_0}}
(mkD|_{Y_{z_0}}))
\end{equation}
is surjective
and the homomorphism
\begin{equation}\label{eq:surj-2}
H^0(Y,\mathcal{O}_Y(kmD))
\times
H^0(Y,\mathcal{O}_Y(k_1mD))
\rightarrow
H^0(Y,\mathcal{O}_Y((k+k_1)mD))
\end{equation}
is surjective for every $k_1\geq 1$ (see, e.g.,~\cite[Example 1.2.22]{Laz04a}).
By the surjectivity~\eqref{eq:surj-2}, we have 
\[
\mathfrak{m}_k/\mathfrak{m}_k^2 \simeq 
m_{z_0}/m_{z_0}^2 
\oplus H^0(Y,kmD)/m_{z_0}H^0(Y,kmD).
\]
By the surjection~\eqref{eq:surj-1}, we have 
\[
\dim_{\kk} 
(\mathfrak{m}_k/\mathfrak{m}_k^2)
\geq 
\dim_{\kk} 
(H^0(Y_{z_0},kmD|_{Y_{z_0}})).
\]
Thus, for $k$ large enough, we have 
\[
\mu(X_k;x_k) =
\mu(X_{k,0};x_{k,0}) 
\geq 
\dim_{\kk}(H^0(Y,kmD)/m_{z_0}H^0(Y,kmD))
\geq 
\dim_{\kk} 
(H^0(Y_{z_0},kmD|_{Y_{z_0}})).
\]
As $Y_{z_0}$ is a projective variety
and $mD|_{Y_{z_0}}$ an ample divisor,
the right side goes to infinity
when so does $k$.
This finishes the proof.
\end{proof} 

\begin{lemma}\label{lem:int-amplified-toric}
Let $(X,\Delta)$ be a $n$-dimensional
toric log Calabi--Yau pair.
Let $\mathbb{T}\leqslant {\rm Aut}(X,\Delta)$ be a maximal dimensional torus
and $G\leqslant \mathbb{T}$ be a finite group.
If the finite morphism
$(X,\Delta)\rightarrow (X/G,\Delta/G)$ is of int-amplified type, then the rank of $G$ equals $n$.
\end{lemma}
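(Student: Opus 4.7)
The plan is to translate the statement into a question about a single fan-preserving lattice endomorphism, and then exploit the int-amplified hypothesis together with the toric combinatorics.

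First, I set up the toric picture. Let $N$ be the cocharacter lattice of $\mathbb{T}$ and let $\Sigma\subset N_\rr$ be the fan defining $X$, so that $\Delta$ is the reduced sum of torus-invariant prime divisors and corresponds to $\Sigma(1)$. The finite subgroup $G\leqslant\mathbb{T}$ corresponds to an overlattice $N\subseteq N'\subseteq N_\qq$ with $[N':N]=|G|$ and $N'/N\cong \hat{G}$. The toric quotient $X/G$ is then defined by the same fan $\Sigma$ viewed inside $N'$, and the quotient map $q\colon X\to X/G$ corresponds to the inclusion $\iota\colon N\hookrightarrow N'$.

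Second, by the int-amplified type hypothesis, there is an isomorphism of log pairs $\phi\colon(X/G,\Delta/G)\to(X,\Delta)$ such that $f:=\phi\circ q$ is an int-amplified endomorphism of $(X,\Delta)$. For a toric log Calabi--Yau pair with reduced boundary, the identity component of $\Aut(X,\Delta)$ equals $\mathbb{T}$. After composing $\phi$ with a suitable torus element, which does not change the int-amplified property because the connected torus acts trivially on the Picard group, I may assume $\phi$ is a toric isomorphism; that is, $\phi$ corresponds to a lattice isomorphism $\beta\colon N'\to N$ mapping the fan $\Sigma$ in $N'$ onto the fan $\Sigma$ in $N$. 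The composition $F:=\beta\circ\iota\colon N\to N$ then preserves $\Sigma$ and satisfies $N/F(N)\cong N'/N\cong \hat{G}$, so that the ranks of $G$ and of $N/F(N)$ coincide.

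Third, I would extract structural information about $F$. Because $f$ is int-amplified, every complex eigenvalue of $F$ has absolute value strictly greater than one, and in particular $F$ is injective. Because $F$ preserves $\Sigma$ and is injective, it permutes the finite set of rays $\Sigma(1)$ via some permutation $\tau$; writing $F(v_\rho)=a_\rho v_{\tau(\rho)}$ for $v_\rho$ the primitive generator of $\rho$, the multipliers $a_\rho=[N'\cap\rr_{\geq 0}v_\rho:N\cap\rr_{\geq 0}v_\rho]$ are positive integers, and for each cycle $C$ of $\tau$ the product $\prod_{\rho\in C}a_\rho$ equals the modulus of the eigenvalue of $F$ on the cyclic subspace spanned by $\{v_\rho\}_{\rho\in C}$ and is therefore strictly greater than one.

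The final step, which I expect to be the main obstacle, is to conclude that $N/F(N)$ has rank $n$ by a combinatorial analysis on $\Sigma$. The plan is to use that $(X,\Delta)$ being log Calabi--Yau forces $\Sigma$ to be complete, so that its rays span $N_\rr$ and the linear relations among the $v_\rho$ are tightly controlled. These relations are preserved by $F$, and together with the cycle-by-cycle inequalities on the multipliers $a_\rho$, they should force each of the $n$ invariant factors of $N/F(N)$ to be nontrivial, yielding rank $n$. The crux of the argument is this bridge from the multiplicative eigenvalue data for $F$ to the additive invariant-factor decomposition of the cokernel, where the combinatorial rigidity of the complete fan $\Sigma$ must be used.
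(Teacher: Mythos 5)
Your reduction of the statement to a single fan-preserving lattice endomorphism $F=\beta\circ\iota\colon N\to N$ with $N/F(N)\cong\widehat{G}$ is clean, and it is a genuinely different route from the paper's: the paper argues by induction on the dimension, restricting the int-amplified structure to boundary strata via adjunction (Lemma~\ref{lem:int-amplified-on-boundary}) and then excluding a torus of rank $n-1$ by showing that the multiplicities of the quotients $X/G_\ell$ would diverge (Lemma~\ref{lem:limit-mult} against Lemma~\ref{lem:only-finite-mult}). However, the step you yourself flag as ``the main obstacle'' is a genuine gap, and in fact no bridge of the kind you hope for exists: the data you have extracted (completeness of $\Sigma$, $F$ permutes the rays with multipliers $a_\rho$, every eigenvalue of $F$ of modulus $>1$, every cycle product $>1$) does \emph{not} imply that all $n$ invariant factors of $N/F(N)$ are nontrivial. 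Take $X=\pp^1\times\pp^1$ with its toric boundary and $F=\diag(2,3)$, i.e.\ $f([x_0:x_1],[y_0:y_1])=([x_0^2:x_1^2],[y_0^3:y_1^3])$. This preserves the complete fan, is int-amplified ($f^*A-A$ is ample for $A=H_1+H_2$), has eigenvalues $2$ and $3$ and multipliers $a_\rho\in\{2,3\}$ on the four fixed rays, yet $N/F(N)\cong\zz/2\oplus\zz/3\cong\zz/6$ has rank $1$. Correspondingly $G=\ker(f|_{\mathbb{T}})=\mu_2\times\mu_3$ is cyclic. So the final step of your plan cannot be carried out as stated.

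It is worth noting that your bookkeeping is faithful to the statement being proved: the rank of $G$ (least number of generators) really is the number of nontrivial invariant factors of $N/F(N)$, so the example above satisfies every hypothesis of the lemma while violating its conclusion with ``rank'' read literally. The conclusion does hold if one either strengthens the hypothesis (e.g.\ to polarized, which forces $F$ to be an integer multiple of a finite-order lattice automorphism) or weakens the conclusion to a statement about tori (e.g.\ that the smallest subtorus of $\mathbb{T}$ containing $\ker(f^\ell|_{\mathbb{T}})$ for all $\ell$ is all of $\mathbb{T}$, which is what the divergent-multiplicity mechanism of Lemma~\ref{lem:limit-mult} actually controls, and is all that is needed for part (3) of Theorem~\ref{thm:int-amplified-toric-quot-CY}). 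As written, though, your proposal both leaves its decisive step unproved and aims at a target that the stated hypotheses do not support, so it does not constitute a proof.
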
 

\begin{proof}
We proceed by induction. The statement is clear in dimension one.
We can find a subtorus $G\leqslant \mathbb{T}_0 \leqslant \mathbb{T}$ such that $\mathbb{T}_0$ has the same rank as $G$.
Write $G\simeq \zz_{n_1}\oplus \dots \oplus \zz_{n_k}\leqslant \mathbb{T}_0$. 
For every $\ell\geq 1$, we set $G_\ell:=\zz_{n_1^\ell}\oplus \dots 
\oplus \zz_{n_k^\ell}\leqslant \mathbb{T}_0$.
It suffices to show that $\mathbb{T}_0$ equals $\mathbb{T}$. Assume this is not the case.
Let $\pi\colon (X,\Delta) \rightarrow (X/G,\Delta/G)$ be the finite quotient
and $\psi\colon (X/G,\Delta/G)\rightarrow (X,\Delta)$ be an isomorphism such that 
$\phi:=\psi\circ \pi\colon (X,\Delta)\rightarrow (X,\Delta)$ is an int-amplified endomorphism.
Thus, we have a commutative diagram
\[
\xymatrix{
(X,\Delta) \ar[d]_-{\pi}\ar[rd]^-{\phi} \\
(X/G,\Delta/G) \ar[r]^-{\psi} & (X,\Delta).
}
\]
Note that $\mathbb{T}/G$ induces a maximal torus action on $(X,\Delta)$ via $\phi$.
As all maximal tori of ${\rm Aut}(X,\Delta)$ are conjugate. Hence, up to replacing $\phi$, we may assume that the
action induced on $(X,\Delta)$ by $\mathbb{T}/G$ equals the $\mathbb{T}$-action.
Thus, for each $\ell\geq 1$, the quotient 
$(X/G_\ell,\Delta/G_\ell)$ 
is an int-amplified endomorphism 
and the quotient homomorphism
$\pi_\ell\colon (X,\Delta)\rightarrow (X/G_\ell,\Delta/G_\ell)$ is isomorphic
to $\phi^\ell$.

Let $S$ be a prime component of $\Delta$. 
Let $(S,\Delta_S)$ be the pair obtained by adjunction of $(X,\Delta)$ to $S$.
Then, the log Calabi--Yau pair $(S,\Delta_S)$ is a $(n-1)$-dimensional toric log Calabi--Yau pair.
For some $\ell\geq 2$, we have $\phi^\ell(S)=S$.
By Lemma~\ref{lem:int-amplified-on-boundary}, the quotient 
$(S,\Delta_S) \rightarrow (S/G_\ell,\Delta_S/G_\ell)$ is of int-amplified type.
By induction on the dimension, we conclude that $k\geq n-1$.
Therefore, the algebraic torus $\mathbb{T}_0$ has rank either $n-1$ or $n$. 
Furthermore, the restriction of $\mathbb{T}_0$ to each prime component of $\Delta$ has rank $n-1$.

Assume that $\mathbb{T}_0$ has rank $n-1$.
Let $\Sigma\subset N_\qq \simeq \qq^n$ be the fan of $X$ so 
$X\simeq X(\Sigma)$.
The algebraic torus $\mathbb{T}_0$ corresponds to a surjective homomorphism
$\rho \colon N_\qq \rightarrow N'_{\qq}\simeq \qq$.
Let $K$ be the kernel of $\rho$.
Let $p\colon \widetilde{X}\rightarrow X$ be a projective toric morphism such that $\widetilde{X}$ admits a quotient
$q\colon \widetilde{X}\rightarrow \pp^1$ 
for the $\mathbb{T}_0$-action.
The morphism $p$ corresponds to the fan refinement $\widetilde{\Sigma}$ of $\Sigma$ 
obtained by adding the cones $\sigma\cap K$ for each cone $\sigma \in \Sigma$.
On the other hand, the morphism $q$ corresponds to the projection $\pi_0$.
Let $Q$ be a $\mathbb{T}_0$-invariant prime divisor of $\widetilde{X}$ that is horizontal over $\pp^1$. 
We argue that $Q$ is a log canonical place of $(X,\Delta)$.
Indeed, let $(\widetilde{X},\widetilde{\Delta})$ be the log pull-back of $(X,\Delta)$ to $\widetilde{X}$. The restriction of 
$(\widetilde{X},\widetilde{\Delta})$ to a general fiber of $q$ is a toric sub-log Calabi--Yau pair.
Thus, all the torus invariant components, including $Q$, must appear with coefficient one.
The image of $Q$ on $X$ is either a divisor or an irreducible subvariety of codimension $2$.
Indeed, the prime divisor $Q$ corresponds to a ray $\rho_Q \in \widetilde{\Sigma}(1)$; 
the ray $\rho_Q$ is either on $\Sigma$
or is the intersection of a $2$-dimensional cone of $\Sigma$ with $K$.

By the previous paragraph, 
under the assumption that $\mathbb{T}_0$ has rank $n-1$, the variety $p(Q)$ has either codimension one or codimension two.
In both cases, we will obtain a contradiction.
Assume that $p(Q)$ is a divisor.
By construction, the restriction of the algebraic torus $\mathbb{T}_0$ to $p(Q)$ has 
rank $n-2$. This contradicts the fact that
the restriction of $\mathbb{T}_0$ to each component of $\Delta$ has rank $n-1$.
Otherwise, assume that the image of $p(Q)$
has codimension two.
Let $\mathbb{T}_1\leqslant \mathbb{T}_0$ be the one-dimensional subtorus that acts as the identity on $Q$ and $\mathbb{T}_1'$ be its complement torus in $\mathbb{T}_0$.
The torus $\mathbb{T}_1'$ acts faithfully on $p(Q)$.
Let $x$ be a general point of $p(Q)$.
Let $\mu_\ell \leqslant \mathbb{T}_1$ be the subgroup of $\ell$ roots of unity
and $x_{0,\ell}$ be the image of 
$x$ in $X/\mu_\ell$.
Let $x_\ell$ be the image of $x$ in $X/G_\ell$.
By construction, the germ $(X/G_\ell;x_\ell)$ is analytically isomorphic to $(X/\mu_\ell;x_{0,\ell})$.
Thus, 
by Lemma~\ref{lem:limit-mult}, we conclude that 
\[
\lim_{\ell \rightarrow \infty}\mu(X/G_\ell; x_\ell) =
\lim_{\ell \rightarrow \infty} 
\mu(X/\mu_\ell; x_{0,\ell}) =\infty.
\]
This contradicts Lemma~\ref{lem:only-finite-mult}.
We conclude that the rank of $\mathbb{T}_0$ equals $n$ and so does the rank of $G$. 
This finishes the proof.
\end{proof}

The following theorem is a well-known statement about the automorphism groups of log Calabi--Yau pairs (see, e.g.~\cite[Theorem 1.1]{Hu18}).

\begin{theorem}\label{thm:aut-CY}
Let $X$ be a Fano type variety and
$(X,\Delta)$ be a log Calabi--Yau pair.
Then, the automorphism group ${\rm Aut}(X,\Delta)$ is a finite extension of an algebraic torus.
\end{theorem}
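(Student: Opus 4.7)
The plan is to regard $\Aut(X,\Delta)$ as a finite type algebraic group over $\cc$ and to identify its identity component $G := \Aut(X,\Delta)^0$ as an algebraic torus; the quotient $\Aut(X,\Delta)/G$ is then automatically finite because it embeds into the automorphism group of the finitely generated group $\Pic(X)$ while preserving the numerical class of a fixed ample divisor. Since $X$ is projective, $\Aut(X)$ is a group scheme locally of finite type, and $\Aut(X,\Delta)$ is its closed subgroup scheme stabilizing $\Delta$ as a Weil divisor.

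The first step is to show that $G$ is a linear algebraic group. Applying the Chevalley--Rosenlicht decomposition, $G$ fits into an exact sequence $1 \to L \to G \to B \to 1$ with $L$ connected linear and $B$ an abelian variety. Because $X$ is Fano type, it is rationally connected, so every morphism from an abelian variety to $X$ is constant. For each $x \in X$, composing $G \to B$ with the orbit map at $x$ produces such a morphism, which therefore must be constant. Letting $x$ range over $X$ shows that $B$ acts trivially on $X$, and faithfulness of the action forces $B = 0$, so $G = L$ is linear.

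The second and main step is to show that $G$ is a torus. Suppose otherwise; then $G$ contains a one-parameter unipotent subgroup $\G_a$ (either inside its unipotent radical, or inside a Borel subgroup of a non-trivial semisimple quotient). This $\G_a$-action preserves $\Delta$, so a general orbit is an affine line $\mathbb{A}^1$ contained in $X \setminus \Supp(\Delta)$. I would then run a $G$-equivariant log minimal model program on $(X,\Delta)$---possible since $X$ is a Mori dream space, being Fano type---producing a $G$-equivariant log Calabi--Yau Mori fiber space $(X',\Delta') \to Z$ on which $\G_a$ still acts non-trivially. On a general fiber $F$ with induced boundary $\Delta'_F$, the pair $(F,\Delta'_F)$ is log Calabi--Yau of Fano type supporting a non-trivial $\G_a$-action, and one derives a contradiction from the $\G_a$-invariance of the global sections of $-m(K_F+\Delta'_F)$ for sufficiently divisible $m$. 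The main obstacle is precisely this exclusion of $\G_a \hookrightarrow G$; the log Calabi--Yau condition together with effectiveness of $\Delta$ is what forbids unipotent flows, and formalizing the equivariant MMP argument in the klt or log canonical setting requires the results of~\cite{Hu18} on automorphisms of log Calabi--Yau pairs of Fano type.
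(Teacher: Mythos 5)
The paper does not actually prove this statement: it is quoted as known, with a pointer to \cite[Theorem 1.1]{Hu18}, so what you are attempting is a reproof of the cited result rather than a reconstruction of an argument in the text. Your overall strategy (kill the abelian part by rational connectedness, then exclude $\G_a$, then bound the component group) is the right shape, but two of the three steps have genuine gaps. First, the finiteness of $\Aut(X,\Delta)/\Aut(X,\Delta)^0$ is not ``automatic'': an automorphism of the pair preserves only $K_X+\Delta\sim_{\qq}0$, which is numerically trivial, so there is no ``fixed ample divisor'' whose class is preserved, and $\Aut(X)$ of a projective variety can perfectly well have infinitely many components. The correct mechanism here is that $X$, being of Fano type, is a Mori dream space, so $\operatorname{Nef}(X)$ is a full-dimensional rational polyhedral cone whose finitely many extremal rays are permuted by $\Aut(X,\Delta)$; hence the image of $\Aut(X,\Delta)$ in $\operatorname{GL}(N^1(X))$ is finite, and the kernel of $\Aut(X)\to\operatorname{GL}(N^1(X))$ is a finite extension of $\Aut^0(X)$ by the classical Hilbert-scheme argument. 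Without some version of this, your finiteness claim is unsupported.

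Second, and more seriously, the exclusion of $\G_a$ --- which you correctly identify as the heart of the theorem --- is not carried out. The contradiction you propose, ``from the $\G_a$-invariance of the global sections of $-m(K_F+\Delta'_F)$,'' is vacuous: since $K_F+\Delta'_F\sim_{\qq}0$, for divisible $m$ the space $H^0(F,-m(K_F+\Delta'_F))$ is one-dimensional, spanned by the section cutting out $m\Delta'_F$, and this section is automatically $\G_a$-invariant (up to scalar) precisely because $\G_a$ preserves $\Delta'_F$; no contradiction results. You then write that formalizing the step ``requires the results of \cite{Hu18}'' --- but that is the very theorem being proved, so the argument is circular at its key point. (A smaller issue: in the Chevalley step, ``composing $G\to B$ with the orbit map at $x$'' does not produce a morphism $B\to X$, since both maps have source $G$; the clean argument is that $X$ of Fano type has $H^1(X,\oo_X)=0$, so its Albanese is trivial and $\Aut^0(X)$ is affine by Nishi--Matsumura.) As it stands, the proposal is a plausible outline, not a proof; for the purposes of this paper the statement should simply be cited, as the authors do.
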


\section{Lifting polarized endomorphisms to finite covers}

In this section, we develop techniques to lift polarized endomorphisms to finite covers of pairs.
Then, we turn to prove some results regarding the algebraic fundamental groups
of open Calabi--Yau varieties.

\begin{theorem}\label{thm:lifting-int-amp}
Let $X$ be a Fano type variety, and let $(X,\Delta)$ be a log Calabi--Yau pair, with $K_X + \Delta \sim 0$. Let $U:= X^{\rm{reg}}\setminus \Delta$.
Suppose that $(X,\Delta)$ admits an int-amplified endomorphism $f \colon (X,\Delta) \to (X,\Delta)$. 
Let $g\colon (Y,\Delta_Y) \to (X,\Delta)$ be a finite cover induced by a finite \'etale cover $g_0\colon U_0 \to U$.
Then there exists $n \gg 0$ such that the pullback of $f^n$ under the fiber product diagram 
\[
\xymatrix{
	(Y',\Delta_{Y'}) \ar[rr]^{\tilde{f}_{Y'}} \ar[d]_{g'} & & (Y,\Delta_Y) \ar[d]^{g}\\
	(X,\Delta) \ar[rr]_{f^n} & & (X,\Delta)
}
\]
is of int-amplified type.
\end{theorem}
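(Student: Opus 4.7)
My plan is to translate the lifting question into group theory via the \'etale fundamental group of $U$, and then exploit the finite presentability of $\pi_1^{\rm top}(U)$ through a pigeonhole argument to produce the required $n$.

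First I would verify that $f$ restricts to an \'etale endomorphism $f|_U\colon U\to U$. The condition $f^\ast(K_X+\Delta)=K_X+\Delta$ combined with the ramification formula gives $R_f = f^\ast\Delta - \Delta \geq 0$, so $f$ is \'etale outside $f^{-1}(\Supp \Delta)$; after possibly shrinking $U$ to an $f$-invariant open subset (whose complement has codimension at least two, so that $\pi_1^{\rm alg}$ is unchanged), this yields an \'etale self-map of $U$. Set $G := \pi_1^{\rm alg}(U)$. Since $U$ is a smooth quasi-projective variety, $\pi_1^{\rm top}(U)$ is finitely presented and $G$ is topologically finitely generated; the cover $g_0$ corresponds to an open subgroup $H \leq G$ of finite index, and $f|_U$ induces an injective continuous endomorphism $f_\ast\colon G\to G$ with image of finite index. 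Via the Galois correspondence, the existence of a lift $f_Y\colon (Y,\Delta_Y)\to(Y,\Delta_Y)$ of $f^n$ compatible with $g$ amounts to the inclusion $f_\ast^n(H) \leq H$ (up to conjugation).

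The key finiteness input is that, for each $D\geq 1$, the number of open subgroups of $G$ of index at most $D$ is finite. Applying this to the normal core $N := \bigcap_{g\in G} gHg^{-1}$, which is open and normal with finite quotient $\Gamma := G/N$, we see that $\Hom(G,\Gamma)$ is finite. The sequence $\pi\circ f_\ast^n\in\Hom(G,\Gamma)$ (with $\pi\colon G\twoheadrightarrow \Gamma$) is therefore eventually periodic: pick $n_1 < n_2$ with $\pi\circ f_\ast^{n_1}=\pi\circ f_\ast^{n_2}$ and set $n := n_2-n_1$. Unpacking this equality shows that $f_\ast^n$ acts as the identity modulo the open normal subgroup $K:=f_\ast^{-n_1}(N)$, and in particular $f_\ast^n(K)\leq K$. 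A parallel pigeonhole on conjugacy classes of finite-index subgroups, together with the relationship $N\leq H$, upgrades this to $f_\ast^n(H)\leq H$ up to conjugation. By covering space theory this yields an endomorphism $f_Y\colon (Y,\Delta_Y)\to (Y,\Delta_Y)$ with $g\circ f_Y = f^n\circ g$, and the universal property of the fiber product then provides the isomorphism $\phi\colon (Y,\Delta_Y)\to(Y',\Delta_{Y'})$ realizing $\tilde f_{Y'}$ as of endomorphism type.

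It remains to check that $f_Y$ is int-amplified. Choose an ample Cartier divisor $A$ on $X$ with $(f^n)^\ast A - A$ ample. Since $g$ is finite, $g^\ast A$ is ample on $Y$, and
\[
f_Y^\ast(g^\ast A) - g^\ast A \;=\; g^\ast\bigl((f^n)^\ast A - A\bigr)
\]
is the pullback of an ample divisor by the finite morphism $g$, hence ample on $Y$. Thus $f_Y$ is int-amplified, and so $\tilde f_{Y'}$ is of int-amplified type. The principal obstacle is the group-theoretic descent in the previous paragraph: the pigeonhole only produces a lift on the cover associated to $K$, which may differ from both $Y$ and its Galois closure $\tilde Y$. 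Converting this into an honest lift on $Y$ itself is precisely where the finite presentability (and not merely finite generation) of $\pi_1^{\rm top}(U)$ is used, by controlling the tail of the pullback orbit through the intermediate Galois covers between $Y$ and $\tilde Y$.
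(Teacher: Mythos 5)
Your proposal has a genuine gap at its central step, and you concede as much in your last paragraph. The pigeonhole on $\Hom(G,\Gamma)$ with $\Gamma=G/N$ gives $\pi\circ f_*^{n_1}=\pi\circ f_*^{n_2}$, hence that $f_*^{n}$ is the identity modulo $K=f_*^{-n_1}(N)$. But $K$ is a \emph{preimage} under $f_*^{n_1}$, and preimages under $f_*$ grow rather than shrink: there is no reason to have $K\le H$ (already for $G=\mathbb{Z}$, $f_*=\times 2$, $N=H=2\mathbb{Z}$ one gets $K=f_*^{-n_1}(2\mathbb{Z})=\mathbb{Z}$). The congruence $f_*^{n}\equiv\mathrm{id}\pmod{K}$ therefore only controls subgroups \emph{containing} $K$, and the promised ``parallel pigeonhole \dots upgrades this to $f_*^n(H)\le H$'' is precisely the assertion that needs proof; invoking finite presentability ``to control the tail of the pullback orbit'' is a placeholder, not an argument. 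The paper's proof takes a more direct route: it pigeonholes the pullback subgroups $H_n:=(f_*^{n})^{-1}(H_0)$ themselves inside the finite set of subgroups of index at most $\deg(g_0)$ (finite generation of $\pi_1(U)$ suffices for this), since these are the subgroups that actually classify the fiber products $U_n$.

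A second, related problem is that you conflate two different conclusions. The theorem asserts that the \emph{fiber product} $Y'$ of $f^n$ and $g$ is of int-amplified type, which requires an isomorphism $(Y,\Delta_Y)\simeq (Y',\Delta_{Y'})$ over $X$, i.e.\ $(f_*^{n})^{-1}(H)=H$. You reduce instead to the lifting condition $f_*^{n}(H)\le H$, which only gives $H\le (f_*^{n})^{-1}(H)$; the universal property of the fiber product then produces a finite morphism $Y\to Y'$ over $X$, not an isomorphism (it is an isomorphism only when $[G:(f_*^{n})^{-1}(H)]=[G:H]$, equivalently $f_*^{n}(G)\cdot H=G$). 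Your opening reduction (étaleness of $f|_U$ via the ramification formula, a point the paper leaves implicit) and your closing ampleness computation ($f_Y^*g^*A-g^*A=g^*((f^n)^*A-A)$ is ample since $g$ is finite) are correct and agree with the paper, but the group-theoretic heart of the argument is missing.
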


\begin{proof}
Let $U_n := U\times_U U_0$ on the fiber product diagram 
\[
\xymatrix{
	U_n \ar[rr]^{f_n} \ar[d]_{g_n} & & U_0 \ar[d]^{g_0}\\
	U \ar[rr]_{f^n|_{U}} & & U
}
\]
Because $g_0$ is an \'etale finite cover of $U$,
the group $H_0:=(g_0)_*(\pi_1(U_0))$ is a subgroup of $\pi_1(U)$ of index $\deg(g_0)$.
The group $\pi_1(U)$ is finitely presented,
so there exist finitely many subgroups of index $\deg(g_0)$.
Therefore, for $n \gg 0$, $(g_0)_*(\pi_1(U_0)) = (g_n)_*(\pi_1(U_0)) = H_0$
and so $U_n$ is homeomorphic to $U_0$. 
Even more, as both $g_0$ and $g_n$ are analytic covers~\cite[Theorem 3.4]{DethloffGrauert94}, 
then there exists an isomorphism $h\colon U_0 \to U_n$ such that $g_n \circ h  = g_0$.
See also~\cite[Proposition 3.13]{GKP16arxiv}.

Consider the fiber product 
\[
\xymatrix{
	(Y',\Delta_{Y'}) \ar[rr]^{\tilde{f}_{Y'}} \ar[d]_{g'} & & (Y,\Delta_Y) \ar[d]^{g}\\
	(X,\Delta) \ar[rr]_{f^n} & & (X,\Delta)
}
\]
As both $Y' \xrightarrow{g'} X$ and $Y \xrightarrow{g} X$ are extensions of locally
biholomorphic coverings corresponding to the subgroup $H_0 \leq \pi_1(U)$, 
there exists an isomorphism $\tilde{h} \colon Y \to Y'$  with $g = g' \circ \tilde{h}$.
As both $Y\to X$ and $Y' \to X$ ramify along $\Delta$,
the isomorphism $\tilde{h}$ is in fact an isomorphism of pairs 
\[
    \tilde{h}\colon (Y,\Delta_Y) \to (Y',\Delta_{Y'}).
\]

Let $f' = \tilde{f}_{Y'} \circ \tilde{h} \colon (Y,\Delta_Y) \to (Y,\Delta_Y)$. 
We need to show that $f'$ is int-amplified.
Let $A$ be an ample divisor in $X$. 
Then $g^* ((f^n)^* A - A)$ is ample, as $g$ is finite and $f$ is int-amplified. 
Therefore, we have that 
\[
g^*((f^n)^* A) - g^* A = (f')^*(g^* A) - g^*A
\]
is ample, which implies that $f'$ is int-amplified.
Thus, the morphism $\tilde{f}^n$ is of int-amplified type.
\end{proof}

\begin{corollary}\label{coro:lifting-index-one}
Let $(X,\Delta)$ be a klt Calabi-Yau pair,
where $\Delta$ is assumed to have standard coefficients.
Let $Y \to (X,\Delta)$ be the index one cover of $K_X + \Delta$.
If $f\colon (X,\Delta) \to (X,\Delta)$
is an int-amplified endomorphism,
then there exists $n \gg 0$ such that
$f^n$ lifts to a finite morphism $f_Y\colon Y' \to Y$ of int-amplified type.
\end{corollary}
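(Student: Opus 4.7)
The plan is to realize Corollary~\ref{coro:lifting-index-one} as a direct application of Theorem~\ref{thm:lifting-int-amp}, with the finite cover $g\colon Y\rightarrow X$ taken to be the index one cover of $K_X+\Delta$.

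First I unpack this cover. Since $(X,\Delta)$ is klt Calabi--Yau with standard coefficients, there is a minimal positive integer $N$ with $N(K_X+\Delta)\sim 0$, and $g\colon Y\rightarrow X$ is the degree $N$ cyclic cover determined by a trivialization of $\mathcal{O}_X(N(K_X+\Delta))$. By construction $g^*(K_X+\Delta)=K_Y\sim 0$, so the induced log boundary on $Y$ vanishes, i.e.\ $\Delta_Y=0$. Because each coefficient of $\Delta$ has the form $1-1/m_i$ with $m_i\in\zz_{\geq 1}$, the ramification index of $g$ along each prime component $D_i$ of $\Delta$ is exactly $m_i$, and $g$ is \'etale in codimension one away from $\Supp(\Delta)$. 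Restricting to $U:=X^{\rm reg}\setminus\Supp(\Delta)$ therefore yields a finite \'etale cover $g_0\colon g^{-1}(U)\rightarrow U$.

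Next I invoke Theorem~\ref{thm:lifting-int-amp} with this $g_0$. The argument given there only relies on the finite presentation of $\pi_1(U)$, which holds automatically for the smooth complex variety $U$, so the Fano type hypothesis appearing in the theorem statement plays no role in this application. The theorem thus produces an integer $n\gg 0$ together with a fiber product diagram whose top horizontal arrow $\tilde f_{Y'}\colon (Y',\Delta_{Y'})\rightarrow (Y,\Delta_Y)$ is of int-amplified type. Since $\Delta_Y=0$ we also have $\Delta_{Y'}=0$, and $\tilde f_{Y'}$ is precisely the desired lift $f_Y\colon Y'\rightarrow Y$ of int-amplified type.

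The one point requiring care is the claim that the ramification of $g$ is concentrated along $\Supp(\Delta)$, so that $g_0$ is genuinely \'etale on $U$. This is immediate from the local description of the cyclic cover around the generic point of each $D_i$: the standard coefficient assumption $1-1/m_i$ identifies $g$ \'etale-locally with the extraction of an $m_i$-th root of a local defining equation for $D_i$, and the Riemann--Hurwitz formula confirms $K_Y=g^*(K_X+\Delta)$. Granted this, the corollary is a direct specialization of the lifting theorem and I do not expect any additional obstacle.
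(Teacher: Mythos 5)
Your proposal is correct and follows essentially the same route as the paper: the paper's own proof consists of one line, namely ``following the proof of Theorem~\ref{thm:lifting-int-amp}'' applied to the index one cover, which is precisely your reduction. Your additional checks --- that the index one cover restricts to a finite \'etale cover of $U=X^{\rm reg}\setminus\Supp(\Delta)$ because the standard coefficients force ramification index $m_i$ along each $D_i$, and that the Fano type and $K_X+\Delta\sim 0$ hypotheses of Theorem~\ref{thm:lifting-int-amp} are not used in its proof (only the finite presentation of $\pi_1(U)$) --- are exactly the points the paper leaves implicit by citing the proof rather than the statement of the theorem.
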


\begin{proof}
Following the proof of Theorem~\ref{thm:lifting-int-amp},
we obtain that for $n \gg 0$, finite morphism $g'$
in the fiber product diagram
\[
\xymatrix{
    Y' \ar[rr]^-{\tilde{f}_Y'} \ar[d]_-{g'} & & Y \ar[d]^-g\\
    (X,\Delta) \ar[rr]_-{f^n} & & (X,\Delta)
}
\]
is isomorphic to the index one cover $g$.
\end{proof}

The following result states that if $X$ is a Fano type variety, with $(X,\Delta)$ a log Calabi-Yau pair of index one, then there exists a finite cover $f\colon (Y,\Delta_Y) \to (X,\Delta)$ such that the algebraic fundamental group $\pi_1^{\rm{alg}}(Y\setminus \Delta_Y)$ is abelian.

\begin{theorem}\label{thm:virt-ab-fun}
Let $X$ be a Fano type variety of dimension $n$ variety and $(X,\Delta)$ be a log Calabi--Yau pair of index one.
Let $U:=X^{\rm reg}\setminus \Delta$.
Then, the group $\pi_1^{\rm alg}(U)$ is virtually abelian.
\end{theorem}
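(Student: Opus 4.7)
The plan is to reduce virtual abelianness of the profinite group $\pi_1^{\rm alg}(U)$ to a uniform Jordan-type bound across its finite quotients, and then to upgrade that bound to an open abelian subgroup by a profinite compactness argument. Concretely, every open finite-index normal subgroup $N\trianglelefteq P:=\pi_1^{\rm alg}(U)$ corresponds to a connected finite \'etale Galois cover $V_N\to U$ with deck group $G_N=P/N$. Since $X$ is normal, $X\setminus X^{\rm reg}$ has codimension at least two, so by Zariski--Nagata purity the cover $V_N\to U$ extends to an \'etale cover of $X^{\rm reg}\setminus \Delta$, and normalizing $X$ in its function field yields a finite Galois cover $g_N\colon Y_N\to X$ of degree $|G_N|$, ramified in codimension one only along $\Delta$. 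Setting $\Delta_{Y_N}$ by $K_{Y_N}+\Delta_{Y_N}=g_N^*(K_X+\Delta)$, the ramification formula shows that $\Delta_{Y_N}$ is reduced and $K_{Y_N}+\Delta_{Y_N}\sim 0$, while by Lemma~\ref{lem:FT-cover} the variety $Y_N$ is again of Fano type. Hence $(Y_N,\Delta_{Y_N})$ is an $n$-dimensional log Calabi--Yau pair of Fano type and index one.

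At this point I would invoke the Jordan-type property for fundamental groups of Fano-type log Calabi--Yau pairs from~\cite{BFMS20,Mor21}: there exists a constant $c(n)$, depending only on $n$, such that for every such pair $(Y,\Delta_Y)$ every finite quotient of $\pi_1^{\rm alg}(Y^{\rm reg}\setminus \Delta_Y)$ contains a normal abelian subgroup of index at most $c(n)$. Applied to $(X,\Delta)$ itself, this produces, for each open finite-index normal $N\trianglelefteq P$, a normal abelian subgroup $A_N\trianglelefteq G_N$ with $[G_N:A_N]\leq c(n)$. The role of the previous paragraph is to certify that the pair $(X,\Delta)$ and all of its covers $(Y_N,\Delta_{Y_N})$ lie in the class to which such a Jordan property applies, which is what makes the constant $c(n)$ genuinely uniform over the entire inverse system.

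To conclude, $U$ is the complement of a closed subset in a complex variety of finite type, so its topological fundamental group is finitely presented and $P$ is topologically finitely generated; in particular $P$ has only finitely many open subgroups of each fixed index. The preimages $\tilde A_N\leq P$ of the $A_N$ are open of index at most $c(n)$, so they take only finitely many distinct values as $N$ varies over a cofinal system of open normals of $P$. Some single open subgroup $\tilde A\leq P$ of index at most $c(n)$ therefore equals $\tilde A_N$ for cofinally many $N$, whence $[\tilde A,\tilde A]\subseteq N$ for cofinally many $N$ and thus $[\tilde A,\tilde A]\subseteq \bigcap_N N=\{e\}$, making $\tilde A$ an open abelian subgroup of $P$. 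The main obstacle is the second step: correctly locating and applying a Jordan-type bound for complements of reduced boundaries in Fano-type log Calabi--Yau pairs of index one, with a constant depending only on the dimension. The passage from this bound to an open abelian subgroup of the profinite group is, by contrast, a routine compactness argument.
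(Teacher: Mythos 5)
Your overall architecture (finite quotients of $\pi_1^{\alg}(U)$ correspond to Galois covers ramified only along $\Delta$; a uniform Jordan-type bound on these quotients; a profinite compactness argument to extract an open abelian subgroup) matches the paper's, and your first and third paragraphs are sound --- the compactness step in particular is a clean repackaging of the paper's inverse-limit argument. The gap is exactly where you suspect it is: the statement you invoke in the second step, namely that every finite quotient of $\pi_1^{\alg}(Y^{\reg}\setminus\Delta_Y)$ contains a normal abelian subgroup of index at most $c(n)$, is not a theorem of \cite{BFMS20} or \cite{Mor21}; it is the uniform version of the theorem you are trying to prove, stated for the complement of the \emph{reduced} boundary, and no result in those references applies directly to that open variety. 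What \cite[Theorem 3]{BFMS20} gives is a Jordan-type exact sequence $1\to A\to \pi_1^{\reg}(X,\Delta')\to F\to 1$ with $A$ abelian and $|F|$ bounded, for \emph{orbifold} fundamental groups of log Calabi--Yau pairs $(X,\Delta')$ with coefficients strictly below one; and these groups are moreover finite by \cite[Theorem 1.2]{Bra20}.

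The paper's proof supplies precisely the bridge your outline is missing. Writing $\Delta=\sum P_j$, one introduces the interpolating boundaries $\Delta_{\vec n}=\sum(1-1/n_j)P_j$ and observes that any finite-index normal subgroup $N\trianglelefteq\pi_1^{\reg}(X\setminus\Delta)$ --- i.e.\ any of your quotients $G_N$ --- arises from a cover with finite ramification orders along the $P_j$, and hence the surjection $\pi_1^{\reg}(X\setminus\Delta)\twoheadrightarrow G_N$ factors through $\pi_1^{\reg}(X,\Delta_{\vec n})$ for a suitable $\vec n$. One then checks that the kernels of the maps to a cofinal family $\pi_1^{\reg}(X,\Delta_{\vec m_i})$ form a cofinal system of finite-index normal subgroups, so that $\pi_1^{\alg}(X\setminus\Delta)=\varprojlim_i\pi_1^{\reg}(X,\Delta_{\vec m_i})$; Braun's theorem makes each term finite, and \cite[Theorem 3]{BFMS20} gives the exact sequences $1\to A_i\to\pi_1^{\reg}(X,\Delta_{\vec m_i})\to F_i\to 1$ with $|F_i|\le C$ uniformly. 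Since a surjective image of a group with a normal abelian subgroup of index $\le C$ again has one, this yields your bound $[G_N:A_N]\le C$ for every $N$, after which your compactness argument (or the paper's passage to a subsequence with constant $F_i=F$ followed by taking inverse limits) finishes the proof. So the missing step is not a matter of ``locating'' a citation: it requires the orbifold interpolation together with Braun's finiteness theorem, which is the actual content of the paper's argument.
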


\begin{proof}
Let $P_1,\ldots ,P_s$ be the prime components of the divisor $\Delta$. 
Consider $\zz^s_{>0}$ as a directed poset, 
where $\vec{m} = (m_1,\ldots,m_s) \leq \vec{n} = (n_1,\ldots, n_s) $ 
if $m_i \mid n_i$ for every $1 \leq i \leq s$. 
Because $\zz^s_{>0}$ is countable, 
we can find an increasing sequence $(\vec{m}_i)_{i\in \nn} \subseteq \zz^d_{>0}$ 
that is cofinal, meaning that for any 
$\vec{n} \in \zz^s_{>0}$ there exists $\vec{m}_i$ 
with $\vec{n} \leq \vec{m}_i$. 

For each $\vec{n} = (n_1,\ldots, n_s) \in \zz^s_{>0}$, define $\Delta_{\vec{n}} := \sum \left( 1 - 1/n_j\right) P_j$.
Therefore, we can compute the following inverse limit by passing to the cofinal subset
\[ \varprojlim_{\vec{m}} \pi_1 (X,\Delta_{\vec{m}}) 
= \varprojlim_{i\in \nn} \pi_1 (X,\Delta_{\vec{m}_i}) .
\]

For each $\vec{m} \in \zz^s_{>0}$, there exists a surjective homomorphism 
$\pi_1(X\setminus \Delta) \to \pi_1(X,\Delta_{\vec{n}})$. 
Even more, by \cite[Theorem 1.2]{Bra20}, 
the groups $\pi_1^{\reg}(X,\Delta_{\vec{n}})$
are finite.
Hence, for each $i\in \nn$, there exists a normal subgroup 
$N_i$ of $\pi^{\reg}_1(X\setminus \Delta)$ of finite index such that 
\begin{equation}\label{eq:surj-pi-reg}
    \pi^{\reg}_1(X\setminus \Delta)/N_i \xrightarrow{\sim} \pi_1^{\reg}(X,\Delta_{\vec{m}_i})
\end{equation}
We show now that the sequence $(N_i)_{i\in \nn}$ is a 
cofinal subset of the poset of normal subgroups 
of $\pi_1(X\setminus \Delta)$ of finite index.

Let $N \leq \pi(X\setminus \Delta)$ be a normal subgroup of finite index.
We need to show that there exists $i \in \nn$ such that $N_i \leq N$.
Let
\[
    U_Y \to U
\]
be the finite \'etale Galois cover associated to $N$.
We extend this cover to a crepant finite Galois cover
\[
    (Y,\Delta_Y) \to (X,\Delta),
\]
possibly ramifying along the the divisor $\Delta$.
Thus, for some $\Delta_{\vec{n}} \leq \Delta$, there is a surjective
homomorphism
\[
    \pi_1(X,\Delta_{\vec{n}}) \to \pi_1(X^{\reg}\setminus \Delta)/N.
\]
Let $\vec{n} \mid \vec{m}_i$ for some $i\in \nn$. Then, there exists a surjective homomorphism
\[
    \pi_1(X,\Delta_{\vec{m}_i}) \to \pi_1(X,\Delta_{\vec{n}})
\]
From \eqref{eq:surj-pi-reg}, 
we obtain a surjective homomorphism 
\[
    \pi_1(X^{\reg}\setminus \Delta)/N_i \to \pi_1(X\setminus \Delta)/N
\]
impliying that $N_i \leq N$.

We conclude that 
\[
    \pi^{\alg}_1(U) = \varprojlim_{i\in \nn} 
    \pi_1(X^{\reg}\setminus \Delta)/N_i \xrightarrow{\sim} 
    \varprojlim_{i\in \nn} \pi_1 (X,\Delta_{\vec{m}_i})
\]

By \cite[Theorem 3]{BFMS20}, for each $\vec{m}_i$ we have an exact sequence
\[
    1 \to A_{i} \to \pi_1 (X,\Delta_{\vec{m}_i}) \to F_i \to 1
\]
where $A_i$ is abelian  and $|F_i| \leq C$, 
a constant that does not depend in $i$.
We can assume, by maybe passing to a subsequence, 
that $F_i = F$ for all $i\in \nn$.
Then, for $j\leq i$, we can induce a surjective map 
$A_i\to A_j$ 
such that the following diagram commutes, where the rows are exact
\[
\xymatrix{
	1 \ar[r] & A_i \ar[r] \ar[d] & \pi_1(X,\Delta_{\vec{m}_i}) \ar[r] \ar[d] & F \ar[r] \ar[d]^{\rotatebox{90}{$\sim$}} & 1\\
	1 \ar[r] & A_j \ar[r] & \pi_1(X,\Delta_{\vec{m}_j}) \ar[r] & F \ar[r] & 1
}
\]

Then, as all the groups involved are finite,
we obtain the exact sequence 
\[
    1 \to \varprojlim_{i\in \nn} A_{i} \to \pi^{\alg}_1(U) \to F \to 1
\]
and because each $A_i$ is abelian, the subgroup
$\displaystyle{\varprojlim_{i\in \nn} A_{i}}$ is abelian  of finite index
in $\pi^{\alg}_1(U)$,
so $\pi^{\alg}_1(U)$ is virtually abelian.
\end{proof}

If $\pi_1^{\rm{alg}}(Y\setminus \Delta_Y)$ is abelian, we can conclude that any finite \'etale cover of $Y\setminus \Delta_Y$ is Galois.

\begin{lemma}\label{lem:prof-comp-ab}
Let $G$ be a group.
Assume that its profinite completion $\widehat{G}$ is an abelian  group.
Then, every subgroup of finite index of $G$ is normal.
\end{lemma}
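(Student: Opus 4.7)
The plan is to reduce the statement to a question about a finite abelian quotient of $G$ and then apply the correspondence theorem. Let $H \leqslant G$ be a subgroup of finite index, and let $N := \bigcap_{g \in G} gHg^{-1}$ be the normal core of $H$. This $N$ is a normal subgroup of $G$ contained in $H$, and since $G$ acts on the coset space $G/H$ with kernel $N$, we have $[G:N] \leq [G:H]!$, so $N$ has finite index in $G$.

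The key step is to use the defining property of the profinite completion. Since $N$ is normal of finite index, the finite quotient $G/N$ is one of the terms in the inverse system defining $\widehat{G} = \varprojlim_{K \triangleleft G,\ [G:K]<\infty} G/K$. The projection $\pi_N \colon \widehat{G} \to G/N$ from the inverse limit composes with the canonical map $\iota \colon G \to \widehat{G}$ to give precisely the quotient homomorphism $G \to G/N$. In particular, $\pi_N$ is surjective, so $G/N$ is a homomorphic image of $\widehat{G}$.

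By hypothesis $\widehat{G}$ is abelian, hence so is its quotient $G/N$. Every subgroup of an abelian group is normal, so $H/N \leqslant G/N$ is normal in $G/N$. By the correspondence theorem, normal subgroups of $G/N$ pull back to normal subgroups of $G$ containing $N$, so $H$ is normal in $G$.

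The argument is essentially immediate once one introduces the normal core of $H$ and recalls that $\widehat{G}$ surjects onto every finite quotient of $G$ by a normal subgroup. There is no genuine obstacle: no deeper input about the topology of $\widehat{G}$ is needed, and the lemma is a formal consequence of the definition of profinite completion together with the abelian hypothesis.
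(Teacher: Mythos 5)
Your argument is correct and is essentially identical to the paper's proof: both pass to the normal core $N=\bigcap_{g\in G}gHg^{-1}$, observe that the finite quotient $G/N$ is an abelian quotient of $\widehat{G}$, and conclude via the correspondence theorem. Your write-up just spells out more explicitly why $G/N$ is a quotient of $\widehat{G}$.
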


\begin{proof}
Let $H \leq G$ be a subgroup of finite index of $G$.
Consider the subgroup 
\[
K = \bigcap_{g\in G} gHg^{-1}
\]
of $G$.
By construction, $K$ is normal in $G$ and of finite index.
In particular, as $\widehat{G}$ is abelian, the quotient $G/K$ is abelian.
Then $H/K$ is a normal subgroup of $G/K$, which implies that $H$ is normal.
\end{proof}

\section{Varieties with torus actions}

In this section, we study varieties $X$
for which the quotient 
$X\rightarrow X/G$ with $G \leqslant {\rm Aut}(X)$ finite is of int-amplified type.

\begin{lemma}\label{lem:producing-boundary}
Let $(X,\Delta)$ be a log Calabi--Yau pair with $K_X+\Delta\sim 0$.
Let $\mathbb{G}_m^k \leqslant {\rm Aut}^0(X,\Delta)$ be an algebraic torus.
Let $G_i:=\bigoplus_{j=1}^k \zz/n_j^i \zz \leqslant \mathbb{G}_m^k$ where each $n_j\geq 2$.
If each finite morphism
$(X,\Delta)\rightarrow (X/G_i,\Delta/G_i)$ is of endomorphism type, then $\Delta \neq 0$.
\end{lemma}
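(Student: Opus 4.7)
My plan is to argue by contradiction. Suppose $\Delta=0$; since $K_X+\Delta\sim 0$ and $\Delta$ is effective, this forces $K_X\sim 0$. The endomorphism-type hypothesis provides isomorphisms $\psi_i\colon (X/G_i,0)\xrightarrow{\sim}(X,0)$, so $K_{X/G_i}\sim 0$ as well. Applying the ramification formula $K_X=\pi_i^*K_{X/G_i}+R_i$ with effective ramification $R_i$ yields $R_i\sim 0$, and hence $R_i=0$, so each quotient $\pi_i\colon X\to X/G_i$ is \'etale in codimension one.

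Next I will locate a closed point $x\in X$ and a one-parameter subgroup $\lambda\colon\mathbb{G}_m\hookrightarrow\mathbb{T}$ to which Lemma~\ref{lem:limit-mult} applies. Since $X$ is projective, Borel's fixed-point theorem gives $X^{\mathbb{T}}\neq\emptyset$. Moreover, smooth projective varieties with trivial canonical class have their connected automorphism scheme an abelian variety (by classical results of Lieberman and Fujiki), ruling out the embedding of an affine torus, so $X$ must be singular. Since $\operatorname{Sing}(X)$ is a closed $\mathbb{T}$-invariant subset, a further application of Borel's theorem produces a singular $\mathbb{T}$-fixed point $x$. Using Sumihiro's theorem, take a $\mathbb{T}$-invariant affine neighborhood $U=\operatorname{Spec} R$ of $x$ with its $\widehat{\mathbb{T}}$-graded coordinate ring, and choose $\lambda$ inside a coordinate factor of $\mathbb{T}=\mathbb{G}_m^k$ such that all nonzero $\lambda$-weights on the local ring at $x$ lie in the positive half-line. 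Watanabe's construction~\cite{Wat81} then realizes $(X;x)$ as a $\lambda(\mathbb{G}_m)$-cone over a positive-dimensional projective variety $Y$, and the natural partial resolution of $(X;x)$ carries a prime exceptional divisor isomorphic to $Y$ on which $\lambda(\mathbb{G}_m)$ acts trivially---precisely the hypothesis of Lemma~\ref{lem:limit-mult}.

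Finally, by Lemma~\ref{lem:limit-mult} we have $\mu(X/\mu_k;x_k)\to\infty$, where $\mu_k\leq\lambda(\mathbb{G}_m)$ is the subgroup of $k$-th roots of unity. Because $\lambda$ lies in a coordinate factor of $\mathbb{T}$, say the $j$-th, the inclusion $\mu_{n_j^i}\subseteq G_i$ holds for every $i$, and the quotient $X\to X/G_i$ factors as $X\to X/\mu_{n_j^i}\to X/G_i$. A direct $\widehat{\mathbb{T}}$-graded calculation shows that the multiplicity at the image of $x$ does not decrease when passing to the further quotient by the finite group $G_i/\mu_{n_j^i}$, so $\mu(X/G_i;\pi_i(x))\to\infty$. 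Since $X/G_i\cong X$ by the endomorphism-type hypothesis, these are multiplicities on the fixed normal variety $X$, contradicting Lemma~\ref{lem:only-finite-mult}.

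The main obstacle is the middle paragraph: producing the singular $\mathbb{T}$-fixed point together with a one-parameter subgroup $\lambda$ in a coordinate factor satisfying the positivity needed to invoke Watanabe's construction. At a smooth Calabi--Yau fixed point the $\mathbb{T}$-weights on the tangent space sum to zero, so no $\lambda$ with all-positive weights exists; it is essential to work at a singular fixed point where the local weight semigroup can be arranged to lie in a pointed cone, and additional equivariant modifications may be required to align this cone with one of the coordinate axes of $\mathbb{T}$.
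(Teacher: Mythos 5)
Your opening reduction (bounded multiplicities via the isomorphisms $X/G_i\simeq X$ together with Lemma~\ref{lem:only-finite-mult}) and your closing contradiction match the paper, but the middle of your argument --- producing a point and a one-parameter subgroup to which Lemma~\ref{lem:limit-mult} applies --- has a genuine gap, which you yourself flag as ``the main obstacle'' without closing it. At a $\mathbb{T}$-fixed point, singular or not, the weights of $\mathbb{T}$ on $\mathfrak{m}_x/\mathfrak{m}_x^2$ need not lie in an open half-space, so no $\lambda$ with all-positive weights need exist; and even when one does, there is no reason it can be taken inside a coordinate factor of $\mathbb{G}_m^k$, which is exactly what you need to guarantee $\mu_{n_j^i}\subseteq G_i$. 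In addition, the asserted ``direct graded calculation'' that multiplicity does not decrease under the further quotient by $G_i/\mu_{n_j^i}$ is not supplied and is not clear at a fixed point, where the whole of $G_i$ acts nontrivially on the local ring.

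The paper avoids both problems by extracting the required divisor from the log Calabi--Yau structure rather than from a fixed point. On a $\mathbb{G}_m^k$-equivariant projective birational model $\pi\colon\widetilde X\to X$ admitting a torus quotient $\widetilde X\to Z$, the general fiber is toric and the crepant pullback $K_{\widetilde X}+\widetilde\Delta=\pi^*K_X$ restricts to a toric sub-log Calabi--Yau pair on it, so some horizontal torus-invariant prime divisor $S$ lies in $\widetilde\Delta^{=1}$. Since $\Delta=0$, $S$ is $\pi$-exceptional; a one-dimensional subtorus $\mathbb{T}_0$ acts as the identity on $S$, while the complementary torus $\mathbb{T}_1$ acts faithfully on a $(k-1)$-dimensional stratum $S_0\subseteq\pi(S)$ (via \cite[Theorem 10.1]{AH06}). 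Choosing $x\in S_0$ general, the faithful $\mathbb{T}_1$-action makes the germ $(X/G_i;x_i)$ analytically isomorphic to $(X/H_i;x_i)$, where $H_i=G_i|_{\mathbb{T}_0}$ is a cyclic subgroup of $\mathbb{T}_0$ of unbounded order, and Lemma~\ref{lem:limit-mult} then applies verbatim --- no positivity of weights, no coordinate alignment, and no monotonicity of multiplicity under further quotients is needed. To repair your write-up you should replace the fixed-point construction by this boundary/log-canonical-place construction; as it stands, the existence of the attractive coordinate $\lambda$ at a singular fixed point is an unproved (and in general false) assertion on which the whole argument rests.
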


\begin{proof}
Let $n$ be the dimension of $X$.
Assume that the statement does not hold, meaning that each 
finite morphism $X \rightarrow X/G_i$ is of endomorphism type and $\Delta=0$.
In particular, there is an upper bound $m_0$ such that for every $i$ and every closed point $x\in X/G_i$ we have
$\mu(X/G_i;x)\leq m_0$.
Let $\pi\colon \widetilde{X}\rightarrow X$ be an $\mathbb{G}_m^k$-equivariant projective birational morphism
for which $\widetilde{X}$ admits a quotient for the $\mathbb{G}_m^k$-action.
Thus, we have a $\mathbb{G}_m^k$-equivariant fibration $\widetilde{X}\rightarrow Z$ with general fiber a $(n-k)$-dimensional toric variety.
Let $\pi^*(K_X)=K_{\widetilde{X}}+\widetilde{\Delta} \sim 0$.
By construction, the sub-pair $(\widetilde{X},\widetilde{\Delta})$ is a sub log Calabi--Yau pair.
Further, the restriction of $(\widetilde{X},\widetilde{\Delta})$ to a general fiber of $\widetilde{X}\rightarrow Z$ is a sub log Calabi--Yau toric pair.
In particular, there is a prime component $S\subseteq \widetilde{\Delta}^{=1}$ that dominates $Z$.
Then, there is a one-dimensional subtorus $\mathbb{T}_0:=\mathbb{G}_m\leqslant \mathbb{G}_m^k$ that acts as the identity on $S$.
Let $\mathbb{T}_1$ be the split torus of $\mathbb{T}_0$ in $\mathbb{G}_m^k$.
For each $i$, we let $H_i$ be the restriction of $G_i$ to $\mathbb{T}_0$.
By construction, the sequence of groups $H_i$ is an infinite sequence of finite subgroups of $\mathbb{T}_0$.
Let $\pi(S)\subset X$ be its image on $S$ on $X$.
By~\cite[Theorem 10.1]{AH06}, the variety $\pi(S)\subset X$ contains a subvariety $S_0$ isomorphic to $\mathbb{G}_m^{k-1}$ on which $\mathbb{T}_1$ acts faithfully.
Let $x\in S_0$ be a general point.
Then, the pair $(X;x)$ is a normal singularity with a $\mathbb{T}_0$-action.
For each quotient $X\rightarrow X_i:=X/G_i$, we let $x_i$ be the image of $x$ on $X_i$.
Since $\mathbb{T}_1$ acts faithfully on $S_0$, we conclude that $(X_i;x_i)$ is locally isomorphic to $(X/H_i;x_i)$.
As $\mathbb{T}_0$ acts as the identity on $S$ from Lemma~\ref{lem:limit-mult}, 
we conclude that 
\[
\lim_{i\rightarrow \infty} \mu(X_i;x_i)=\infty.
\]
On the other hand, for each $i$, there is an isomorphism $\psi_i \colon X_i\rightarrow X$ so we have 
\[
\mu(X_i;x_i) = \mu(X;\psi_i(x_i)).
\]
Thus, there is a sequence of closed points on $X$ for which the multiplicity diverges.
This contradicts Lemma~\ref{lem:only-finite-mult}.
We conclude that $\Delta\neq 0$.
\end{proof} 

\begin{theorem}\label{thm:int-amplified-toric-quot-CY}
Let $(X,\Delta)$ be a $n$-dimensional log Calabi--Yau pair with $K_X+\Delta\sim 0$.
Let $G\leqslant \mathbb{T} \leqslant {\rm Aut}^0(X,\Delta)$ where $G$ is a finite group and $\mathbb{T}$ is an algebraic torus.
Assume that the finite morphism
$(X,\Delta) \rightarrow (X/G,\Delta/G)$ is of int-amplified type. Then, the following conditions are satisfied:
\begin{enumerate}
\item the algebraic torus $\mathbb{T}$ has rank $n$, 
\item the group $G$ has rank $n$, and 
\item the pair $(X,\Delta)$ is a log Calabi--Yau toric pair.
\end{enumerate}
\end{theorem}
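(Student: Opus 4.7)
The plan is to prove the three conclusions by induction on $n = \dim X$, with the main step being to establish conclusion (1), that $\mathbb{T}$ has rank $n$. Once (1) is known, $(X,\Delta)$ is automatically a toric log Calabi--Yau pair (a normal $n$-dimensional variety with a faithful $n$-dimensional torus action is toric, and log canonicity together with $K_X+\Delta\sim 0$ forces $\Delta$ to equal the reduced torus-invariant boundary), giving (3); Lemma~\ref{lem:int-amplified-toric} then delivers (2). The base case $n=1$ is immediate: since $G$ is nontrivial (otherwise the quotient is the identity, not int-amplified), $\mathbb{T}$ has positive rank, which forces $X\simeq\pp^1$ with $\Delta=[0]+[\infty]$.

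For the inductive step, assume the theorem in dimensions less than $n$ and suppose for contradiction that $k:=\rank \mathbb{T}<n$. Following the opening of the proof of Lemma~\ref{lem:int-amplified-toric}, after replacing $\psi$ by an appropriate inner twist we may assume that each iterate $\phi^i=(\psi\circ \pi_G)^i$ is realized as the quotient by $G_i:=\bigoplus_j \zz/n_j^i\zz \leqslant \mathbb{T}\simeq \mathbb{G}_m^k$ (where $G=\bigoplus_j \zz/n_j\zz$) followed by an isomorphism of pairs, and each such quotient is of int-amplified type. Lemma~\ref{lem:producing-boundary} then forces $\Delta\neq 0$. Choose a $\mathbb{T}$-equivariant projective birational model $p\colon \widetilde{X}\to X$ admitting a $\mathbb{T}$-equivariant quotient fibration $q\colon \widetilde{X}\to Z$ whose general fiber is a $k$-dimensional toric variety. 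Writing $K_{\widetilde{X}}+\widetilde{\Delta}=p^*(K_X+\Delta)\sim 0$, the restriction of $(\widetilde{X},\widetilde{\Delta})$ to a general fiber of $q$ is a toric sub-log Calabi--Yau pair, so there is a prime component $S\subseteq \widetilde{\Delta}^{=1}$ dominating $Z$, and a one-dimensional subtorus $\mathbb{T}_1\leqslant \mathbb{T}$ acting as the identity on $S$; let $\mathbb{T}_1^\prime$ denote a complementary subtorus in $\mathbb{T}$.

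We analyze the image $S_1:=p(S)\subseteq X$ in two cases, mirroring the endgame of Lemma~\ref{lem:int-amplified-toric}. In the first case, $S_1$ is a prime divisor; since $S$ has coefficient one in $\widetilde{\Delta}$, it descends to a prime component of $\Delta$. The subtorus $\mathbb{T}_1$ acts trivially on $S_1$, so the image $\mathbb{T}_{S_1}$ of $\mathbb{T}$ in $\Aut^0(S_1^\vee,\Delta_{S_1^\vee})$ has rank at most $k-1\leq n-2$. For a large iterate $\phi^i$ fixing $S_1$, Lemma~\ref{lem:int-amplified-on-boundary} yields an int-amplified endomorphism of $(S_1^\vee,\Delta_{S_1^\vee})$, which factors as the quotient by the faithful image of $G_i$ in $\mathbb{T}_{S_1}$ followed by an isomorphism of pairs; the inductive hypothesis applied to $(S_1^\vee,\Delta_{S_1^\vee})$ then forces $\rank \mathbb{T}_{S_1}=n-1$, a contradiction. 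In the second case, $S_1$ has codimension at least two. At a general point $x\in S_1$, the subtorus $\mathbb{T}_1^\prime$ acts faithfully, so the germ $(X/G_i;x_i)$ is analytically isomorphic to $(X/(G_i\cap \mathbb{T}_1);x_{0,i})$. Applying Lemma~\ref{lem:limit-mult} to the $\mathbb{T}_1$-action (with the strict transform of $S$ on a further $\mathbb{T}$-equivariant resolution serving as a prime exceptional divisor on which $\mathbb{T}_1$ acts trivially) gives $\mu(X/G_i;x_i)\to \infty$; but each isomorphism $X/G_i\simeq X$ would then make the multiplicity on $X$ unbounded, contradicting Lemma~\ref{lem:only-finite-mult}.

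The main obstacle is the bookkeeping in the first case confirming that the endomorphism on $(S_1^\vee,\Delta_{S_1^\vee})$ produced by Lemma~\ref{lem:int-amplified-on-boundary} is of the precise form demanded by the inductive hypothesis, namely the quotient by a finite subgroup of a torus in $\Aut^0(S_1^\vee,\Delta_{S_1^\vee})$ followed by an isomorphism of pairs. This requires tracking the factorization of $\phi^i|_{S_1^\vee}$ through the normalization $S_1^\vee\to S_1$, identifying the effective torus action after quotienting by $\ker(\mathbb{T}\to \Aut^0(S_1^\vee,\Delta_{S_1^\vee}))$, and possibly further adjusting the isomorphism of pairs by an inner twist to ensure compatibility with $\mathbb{T}_{S_1}$. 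With these details in place, the induction closes and all three conclusions follow.
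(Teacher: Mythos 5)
Your proposal is correct and follows essentially the same route as the paper: reduce to $\Delta\neq 0$ via Lemma~\ref{lem:producing-boundary}, pass to an equivariant model with a $\mathbb{T}$-quotient, and derive a contradiction from a horizontal boundary component $S$ either by applying the inductive hypothesis to the $(n-1)$-dimensional adjunction pair (when $p(S)$ is a divisor, via Lemma~\ref{lem:int-amplified-on-boundary}) or by the diverging-multiplicity argument of Lemmas~\ref{lem:limit-mult} and~\ref{lem:only-finite-mult} (when $p(S)$ has codimension at least two), with (3) and then (2) following from (1) exactly as in the paper. The only difference is organizational: the paper first establishes $\rank\mathbb{T}\geq n-1$ from all boundary components and then rules out the complexity-one case, whereas you fold both steps into a single case analysis inside the induction.
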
 

\begin{proof} 
The algebraic group ${\rm Aut}^0(X,\Delta)$ fits in an exact sequence
\[
1\rightarrow {\rm Aut}^0_L(X,\Delta) 
\rightarrow {\rm Aut}^0(X,\Delta) 
\rightarrow A(X,\Delta) \rightarrow 
1,
\]
where $A(X,\Delta)$ is a complex torus over the base field
and ${\rm Aut}^0_L(X,\Delta)$ is a linear algebraic group.
It is clear that $\mathbb{T} \leqslant {\rm Aut}^0_L(X,\Delta)$ and we may assume it is a maximal algebraic torus of this linear algebraic group.

There exists an isomorphism $\psi\colon (X/G,\Delta/G)\rightarrow (X,\Delta)$ of pairs making the following diagram commute:
\begin{equation}
\label{eq:first-com}
\xymatrix{
(X,\Delta)\ar[d]^-{/G} \ar[rd]^-{f} \\
(X/G,\Delta/G)\ar[r]^-{\psi} & (X,\Delta). 
}
\end{equation}
where $f$ is an int-amplified endomorphism of the log pair $(X,\Delta)$.
Since $\mathbb{T}/G \simeq \mathbb{T}$, the previous commutative diagram induces, via $f$, a maximal torus action on $(X,\Delta)$.
Thus, there is a finite subgroup $H \leqslant \mathbb{T}/G$ for which $(X,\Delta)\rightarrow (X/H,\Delta/H)$ is of int-amplified type. Furthermore, the group $H$ is isomorphic to $G$.

Set $G_1:=G$ and define $G_i$ to be the preimage of $G_1$ via $\pi_i$
in the following short exact sequence
\[
\xymatrix{ 
1 \ar[r] & 
G_{i-1} \ar[r] & 
\mathbb{T} \ar[r]^-{\pi_i} &
\mathbb{T}/G_{i-1} \ar[r] & 1.
}
\]
Thus, if $G\simeq \bigoplus_{j=1}^r \zz/n_j\zz$, then
$G_i\simeq \bigoplus_{j=1}^r \zz/n_j^i\zz$
for each $i$. 
Due to the commutative diagram~\eqref{eq:first-com}, for each $k$, we have a commutative
diagram as follows:
\begin{equation}
\label{eq:long-diag}
\xymatrix{
(X,\Delta)\ar[d]^-{/G_1} \ar[rd]^-{f} & &  \\ 
(X/G_1,\Delta/G_1)\ar[r]^-{\psi_1} \ar[d]^-{/G_{k-1}} & (X,\Delta)\ar[d]^-{/G_{k-1}}\ar[rd]^-{f^{k-1}} & \\ 
(X/G_k, \Delta/G_k)\ar[r]^-{\psi_k} & (X/G_1,\Delta/G_1)\ar[r]^-{\phi_{k-1}}  & (X,\Delta).\\
}
\end{equation}
In the previous diagram, all the vertical morphisms are finite quotients,
$\psi_1:=\psi$,
and $\phi_{k-1}:=\psi_1\circ \dots \circ \psi_{k-1}$.
Hence, due to the commutative diagram~\eqref{eq:long-diag}, for every $k$, we have a commutative diagram:
\begin{equation}
\label{eq:short-diag-k}
\xymatrix{ 
(X,\Delta) \ar[rd]^-{f^k} \ar[d]^-{/G_k} & \\ 
(X/G_k,\Delta/G_k) \ar[r]^-{\phi_k} & (X,\Delta).
} 
\end{equation} 
Up to passing to a sub-torus of $\mathbb{T}$,
we are in the situation of Lemma~\ref{lem:producing-boundary}.
Thus, we conclude that $\Delta\neq 0$.
Let $S\subseteq \Delta = \lfloor \Delta\rfloor$ be a prime component.
By Lemma~\ref{lem:int-amplified-on-boundary}, there exists $i$ for which the finite morphism
$(S^\vee,\Delta_{S^\vee})\rightarrow 
(S^\vee/G_i,\Delta_{S^\vee}/G_i)$
is of int-amplified type.
Observe that 
$(S^\vee,\Delta_{S^\vee})$ is a $(n-1)$-dimensional log Calabi--Yau pair
with $K_{S^\vee}+\Delta_{S^\vee}\sim 0$.
Further, we have 
$H_i\leqslant \mathbb{T}_S \leqslant {\rm Aut}^0(S^\vee,\Delta_{S^\vee})$
where $\mathbb{T}_S:=\mathbb{T}|_{S^\vee}$
and $H_i:=G_i|_{S^\vee}$.
Thus, by induction on the dimension, the following conditions are satisfied:
\begin{itemize}
\item[(i)] we have $\rank(\mathbb{T}_S)=n-1$,
\item[(ii)] the group $H_i$ has rank $n-1$, and 
\item[(iii)] the pair $(S^\vee,\Delta_{S^\vee})$ is a log Calabi--Yau toric pair.
\end{itemize} 
As the torus $\mathbb{T}_S$ is a homomorphic image of $\mathbb{T}$, we conclude that 
the rank of $\mathbb{T}$ is at least $n-1$.
From the previous argument, we deduce that every prime component of $\lfloor \Delta\rfloor$ is a, possibly non-normal, toric variety.
Further, the restriction of $\mathbb{T}$ to every prime component $S$ of $\lfloor \Delta\rfloor$ is a maximal torus of $S$.

We aim to show that $\mathbb{T}$ has rank $n$. This would show $(1)$.
For the sake of contradiction, assume that $\mathbb{T}$ has rank $n-1$.
Thus, the variety $X$ is a normal variety of complexity one (see, e.g.,~\cite{AH06}). 
Let $q\colon (\widetilde{X},\widetilde{\Delta})\rightarrow (X,\Delta)$ be a $\mathbb{T}$-equivariant projective birational morphism
such that $(\widetilde{X},\widetilde{\Delta})$ admits a $\mathbb{T}$-quotient to a smooth projective curve $C$.
By~\cite[Theorem 10.1]{AH06}, all the divisors contracted by $q\colon \widetilde{X}\rightarrow X$ are prime components of $\lfloor \widetilde{\Delta}\rfloor$ that are horizontal over $C$.
In particular, the log pair $(\widetilde{X},\widetilde{\Delta})$ is a log Calabi--Yau pair.
We write $\lfloor \widetilde{\Delta}\rfloor_{\rm hor}$ for the sum of the components of $\lfloor \widetilde{\Delta}\rfloor$ that are horizontal over $C$. 
On the other hand, since $\mathbb{T}$ acts fiberwise over $C$, then the restriction of $\mathbb{T}$ to any prime component of 
$\lfloor \widetilde{\Delta}\rfloor_{\rm hor}$
has rank at most $n-2$.
Thus, we conclude that $\lfloor \widetilde{\Delta}\rfloor_{\rm hor}$ is contracted by the projective birational morphism $\pi\colon \widetilde{X}\rightarrow X$.
Let $S$ be a prime component of
$\lfloor \widetilde{\Delta}\rfloor_{\rm hor}$.
Let $\mathbb{T}_0$ be the torus that acts as the identity on $S$ and $\mathbb{T}_1$ be its complement torus on $\mathbb{T}$.
The image of $S$ on $X$ is a subvariety of codimension $2$.
Let $x$ be the general point of $q(S)$.
For every $i$, we let $x_i$ to be the image of $x$ on $X/G_i$. For each $i$, there is an isomorphism $\psi\colon X/G_i\rightarrow X$
that induces an equality
\[
\mu(X/G_i;x_i) = \mu(X;\psi_i(x_i)).
\]
Let $\mu_i$ be the subgroup of $i$-roots of unity of $\mathbb{T}_0$.
Let $r_i\colon X\rightarrow X/\mu_i$.
Since $\mathbb{T}_1$ acts faithfully on $q(S)$, we have 
\[
\mu(X/G_i;x_i) = \mu(X/\mu_i;r_i(x)).
\]
By Lemma~\ref{lem:limit-mult} the left-hand side diverges.
Hence, we have a sequence of points $\psi_i(x_i))$ of $X$ for which 
the sequence $\mu(X;\psi_i(x_i))$ diverges.
This contradicts Lemma~\ref{lem:only-finite-mult}. Hence, we have concluded that $\mathbb{T}$ has rank $n$. This implies $(1)$ in the statement of the theorem.

Now, we turn to prove $(3)$.
We have $\mathbb{T}\leqslant {\rm Aut}^0(X)$ where $n$ is the dimension of $X$.
In particular, we have 
$\mathbb{T}\leqslant {\rm Aut}^0_L(X)$, so $\mathbb{T}$ has a regular effective action on $X$. Hence, $X$ is a toric variety.
As $\mathbb{T}\leqslant {\rm Aut}(X,\Delta)$, we conclude that $\Delta$ is a $\mathbb{T}$-invariant divisor, so the pair $(X,\Delta)$ is toric.
Thus, the log pair $(X,\Delta)$ is a toric log Calabi--Yau pair.
Finally, statement $(2)$ follows from $(3)$ and Lemma~\ref{lem:int-amplified-toric}.
\end{proof} 

\section{Proof of the Main Theorems}

In this section, we prove the main theorems of the article. First, we introduce a lemma regarding Galois endomorphisms. 

\begin{lemma}\label{lemma:aut_quotient}
Let $f\colon (X,\Delta) \to (X,\Delta)$ be an endomorphism
with $f^n$ Galois for all $n\geq 1$. Assume that $\mathrm{Aut}(X,\Delta)$ is a finite extension of an algebraic torus $\mathbb{T}$. Let $G_n \leqslant \mathrm{Aut}(X,\Delta)$ be the subgroup associated to $f^n$. Then, for some $n \geq 1$, we have $G_n \leqslant \mathbb{T}$.

\end{lemma}

\begin{proof}

For each $m,n\geq 1$, there is a short exact sequence 
\begin{equation}\label{eq:exact_seq_quotients}
1 \rightarrow G_n \rightarrow G_{n+m} \rightarrow G_m \rightarrow 1.
\end{equation}
By assumption, we have \[1\rightarrow \mathbb{T}
\rightarrow {\rm Aut}(X,\Delta) 
\xrightarrow{\pi} F \rightarrow 
1,\] where $F$ is a finite group.
Let $m\geq 1$ and take $n\gg 0$ such that $\pi(G_n) = \pi(G_{n+m})$.
Then $G_{n+m}/G_n$ is abelian, and by \eqref{eq:exact_seq_quotients} the group $G_m$ is also abelian.
Thus, $G_m$ is an abelian group for each $m\in \zz_{\geq 1}$.
\color{black}

Let $Z_m$ be the centralizer of $G_m$ in $\mathrm{Aut}(X,\Delta)$. As $G_m$ is abelian, the group $G_m$ is contained in $Z_m$.
We have the following chains of subgroups in $\mathrm{Aut}(X,\Delta)$:
\[
    G_1 \leq G_2 \leq \cdots \leq G_n \leq \cdots
\] 
\[
    Z_1 \geq Z_2 \geq \cdots \geq Z_n \geq \cdots
\]
We want to show that there exists a subgroup $Z_\infty \leq \mathrm{Aut}(X,\Delta)$ such that for some $N \geq 1$,
we have $Z_n = Z_\infty$ for all $n\geq N$.

It suffices to show that $Z_i \cap \mathbb{T}$ stabilizes for some $i\gg 0$. Indeed, $Z_i$ stabilizes if the images of $Z_i$ in $F$ and $Z_i \cap \mathbb{T}$ stabilize. The former follows from the finiteness of $F$, so it is enough to prove the latter.

Note that we have 
\[Z_i\cap \mathbb{T} = \bigcap\limits_{g\in G_i} \left(Z(g)\cap \mathbb{T} \right)\] for each $i$, where $Z(g)$ denotes the centralizer of $g$ in $\mathrm{Aut}(X,\Delta)$. 
Even more, $Z(g) \cap \mathbb{T} = Z(gt) \cap \mathbb{T}$ for all $t\in \mathbb{T}$. 
As $\mathrm{Aut}(X,\Delta)/\mathbb{T}$ is finite, there exist finitely many $g_1,\ldots, g_k \in \mathrm{Aut}(X,\Delta)$ such that for any $g\in \mathrm{Aut}(X,\Delta)$, $g = g_jt$ for some $1\leq j \leq k$ and $t\in \mathbb{T}$. 
Therefore, we can write 
\[
Z_i \cap \mathbb{T} = \bigcap\limits_{j\in I} (Z(g_j)\cap \mathbb{T})
\]
for some $I\subseteq \{1,\dots,k\}$.
Thus $Z_i\cap \mathbb{T}$ stabilize for $i\gg 0$.

Let $Z_\infty = \bigcap Z_i$, and let $Z_\infty^0$ be the connected 
component of $Z_\infty$.
Then, we have an exact sequence 
\begin{equation}\label{Zinfty-seq} 
\xymatrix{
1\ar[r] & Z_\infty^0 \ar[r] & Z_\infty \ar[r]^-{\pi_\infty}
& F_\infty \ar[r] & 1 
}
\end{equation} 
where $F_\infty$ is a finite group,
being $Z_\infty$ a linear algebraic group.
Using~\eqref{Zinfty-seq}, for any $G_n$ we obtain
an exact sequence
\begin{equation}\label{eq:exact_seq_quotientgroup}
    1 \to Z_\infty^0/ (Z_\infty^0 \cap G_n) \to Z_\infty/G_n \to F_\infty/\pi(G_n) \to 1.
\end{equation}
For all $i\gg 0$, we have $G_i\leq Z_\infty \leq N_i$, where $N_i$ is the normalizer of $G_i$ in $\mathrm{Aut}(X,\Delta)$. 
For every integers $m>n$, we have a commutative diagram 
\begin{equation}\label{eq:quot}
\xymatrix{
(X,\Delta)\ar[dd]_-{/G_n}\ar[rrdd]^-{/G_m} & \\
\\
(X/G_n,\Delta/G_n)\ar[rr]^-{/(G_m/G_n)} & & (X/G_m,\Delta/G_m). 
}
\end{equation} 
The endomorphism $f^{m-n}$ is isomorphic
to the quotient in the bottom of the commutative diagram~\eqref{eq:quot}.
Thus, it suffices to show that $G_m/G_n$ is contained in the connected component of ${\rm Aut}(X/G_n,\Delta/G_n)$.

Take $m > n \gg 0$ such that the images of $G_m$ and $G_n$ in $F_\infty$ via $\pi_\infty$ agree. For each $m \geq 1$, there exists a homomorphism $N_n/G_n \to \mathrm{Aut}(X/G_n,\Delta/G_n)$. Therefore, we obtain a commutative diagram:
\[
\xymatrix{
    N_n/G_n \ar[r] & \mathrm{Aut}(X/G_n,\Delta/G_n)\\
    Z_\infty/G_n \ar@{^{(}->}[u] \ar[ur] \\
    Z_\infty^\circ/ (Z_\infty^\circ \cap G_n) \ar@{^{(}->}[u] \ar[r] & \mathrm{Aut}^\circ(X/G_n,\Delta/G_n) \ar@{^{(}->}[uu]
}
\]
Since $Z_\infty^0$ is a connected linear algebraic group its image in ${\rm Aut}(X/G_m,\Delta/G_m)$ lies in the connected component.
On the other hand, due to~\eqref{eq:exact_seq_quotientgroup}, 
we have a commutative diagram
\[
    \xymatrix{
        1 \ar[r] & Z_\infty^0/(Z_\infty^0 \cap G_n) \ar[r] & Z_\infty/G_n \ar[r] & F_\infty/ \pi_\infty(G_n) \ar[r] & 1\\
        1 \ar[r] & (Z^0_\infty \cap G_m)/(Z_\infty^0 \cap G_n) \ar[u] \ar[r] & G_m/G_n \ar[r] \ar[u] & \pi_\infty(G_m)/\pi_\infty(G_n)\ar[u] \ar[r] & 1
    }
\]
Since $\pi_\infty(G_m)/\pi_\infty(G_n)=1$, 
we conclude that the image of $G_m/G_n$ in ${\rm Aut}(X/G_m,\Delta/G_m)$ is contained in the image of $Z_\infty^0/(Z_\infty^0\cap G_m)$ in ${\rm Aut}(X/G_m,\Delta/G_m)$.
Hence, the image of $G_m/G_n$ is contained in the connected component of ${\rm Aut}(X/G_n,\Delta/G_n)$. This finishes the proof of the lemma.
\end{proof}

Now, we turn to prove the main theorem of the article.

\begin{proof}[Proof of Theorem~\ref{introthm1}]
Let $p_0\colon X_0\rightarrow X$ be the index one cover of $K_X+\Delta\sim_\qq 0$.
As $K_X+\Delta$ is a Weil divisor, the finite morphism
$p_0\colon X_0\rightarrow X$ is unramified in codimension one.
Let $p_0^*(K_X+\Delta)=K_{X_0}+\Delta_{X_0}$.
By Lemma~\ref{lem:FT-cover}, we conclude that
$X_0$ is a Fano type variety
and $\Delta_{X_0}$ is a $1$-complement of $X_0$.
By Theorem~\ref{thm:lifting-int-amp}, for some $m\geq 1$, we have a commutative diagram:
\[
\xymatrix{
(X_0,\Delta_0)\ar[d]^-{p_0} \ar[r]^-{f_0} & (X_0,\Delta_0)\ar[d]^-{p_0} \\ 
(X,\Delta) \ar[r]^-{f^m} & (X,\Delta).
}
\]
In the previous commutative diagram the
endomorphism $f_0$ is a polarized
endomorphism of the pair $(X_0,\Delta_0)$.
Replacing $(X,\Delta)$ with $(X_0,\Delta_0)$, 
we may assume that $\Delta$ is a $1$-complement
of the Fano type variety $X$.

From now on, we assume that $X$ is a Fano type variety
and $(X,\Delta)$ is a log Calabi--Yau pair of index one.
Let 
$f\colon (X,\Delta)\rightarrow (X,\Delta)$ be an int-amplified endomorphism.
Let $U=X^{\rm reg}\setminus \Delta$.
By Lemma~\ref{thm:virt-ab-fun}, we know that $\pi_1^{\rm alg}(U)$ is a virtually abelian  group.
Let $N\leqslant \pi_1^{\rm alg}(U)$ be a normal abelian  subgroup of finite index.
Let $U'\rightarrow U$ be the corresponding finite cover. 
Let $g\colon (Y,\Delta_Y)\rightarrow (X,\Delta)$ be the induced finite cover of log pairs. 
Set $U_Y=Y^{\rm reg}\setminus \Delta_Y$.
Then, we have that $\pi_1^{\rm alg}(U_Y)$ is an abelian  group.
Indeed, $U_Y$ is smooth and $U'$ is a big open subset of $U_Y$.
In particular, $\pi_1^{\rm alg}(Y,\Delta_Y)$ is an abelian  group.
By Theorem~\ref{thm:lifting-int-amp}, for some $n\geq 1$, we have a commutative diagram:
\[
\xymatrix{
(Y,\Delta_Y)\ar[d]^-{g}\ar[r]^-{f_Y} & (Y,\Delta_Y)\ar[d]^-{g} \\ 
(X,\Delta)\ar[r]^-{f^n} & (X,\Delta).
}
\]
In the previous diagram, $f_Y$ is an int-amplified endomorphism for the log pair $(Y,\Delta_Y)$.

The finite morphism $f_Y\colon (Y,\Delta_Y)\rightarrow (Y,\Delta_Y)$
corresponds to a subgroup
of finite index
of $\pi_1(Y,\Delta_Y)$.
The profinite completion of
$\pi_1(Y,\Delta_Y)$ is
$\pi_1^{\rm alg}(Y,\Delta_Y)$.
Thus, by Lemma~\ref{lem:prof-comp-ab} 
the finite morphism $f_Y\colon (Y,\Delta_Y)\rightarrow (Y,\Delta_Y)$ corresponds to a normal subgroup of finite index of $\pi_1(Y,\Delta_Y)$.
Henceforth, the finite morphism
$f_Y\colon (Y,\Delta_Y)\rightarrow (Y,\Delta_Y)$ is Galois.
Therefore, there exists a finite group $G\leqslant {\rm Aut}(Y,\Delta_Y)$ and an isomorphism of log pairs
$\psi \colon (Y/G,\Delta_Y/G)\rightarrow (Y,\Delta_Y)$ making the following diagram commutative:
\[
\xymatrix{
(Y,\Delta_Y)\ar[rd]_-{f_Y}\ar[r]^-{/G} & (Y/G,\Delta_Y/G) \ar[d]^-{\psi} \\
& (Y,\Delta_Y).
}
\]
In particular, the quotient
$(Y,\Delta_Y)\rightarrow (Y/G,\Delta_Y/G)$ is of int-amplified type.
By Lemma~\ref{lem:FT-cover}, the variety $Y$ is of Fano type. Hence,
by
Theorem~\ref{thm:aut-CY}, we know that ${\rm Aut}(Y,\Delta_Y)$ is a finite extension of an algebraic torus.
By Lemma~\ref{lemma:aut_quotient}, up to replacing $f$ with an interation $f^n$, we may assume that $G$ is a finite subgroup of a maximal algebraic torus of ${\rm Aut}(Y,\Delta_Y)$.
By Theorem~\ref{thm:int-amplified-toric-quot-CY}, we conclude that $(Y,\Delta_Y)$ is a log Calabi--Yau toric pair.

We conclude that the pair $(X,\Delta)$ is the finite quotient of a toric log Calabi--Yau pair $(Y,\Delta_Y)$. This finishes the proof.
\end{proof}

\begin{proof}[Proof of Theorem~\ref{introthm2}]
Let $(X,\Delta)$ be a klt log Calabi--Yau pair with standard coefficients.
Let $f\colon Y\rightarrow X$ be the index one cover of $K_X+\Delta$,
so we have $f^*(K_X+\Delta)=K_Y$.
Then, $Y$ is a klt Calabi--Yau variety. 
By Corollary~\ref{coro:lifting-index-one}, we have a commutative diagram 
\[
\xymatrix{
Y\ar[d]_-{f}\ar[r]^-{g_Y} & Y\ar[d]^-{f} \\ 
(X,\Delta)\ar[r]^-{g^n} & (X,\Delta)  
}
\]
where $n\geq 1$ and $g_Y$ is an int-amplified endomorphism.
By~\cite[Theorem 1.9.(1)]{Meng-17}, we conclude that $Y$ is a finite quotient, unramified in codimension one, of an abelian  variety $A$.
The finite morphism 
$h\colon A \rightarrow (X,\Delta)$
corresponds to a subgroup $H\leqslant \pi_1^{\rm reg}(X,\Delta)$ of finite index.
Hence, we may find a subgroup $N\leqslant H \leqslant \pi_1^{\rm reg}(X,\Delta)$ of finite index that is normal in $\pi_1^{\rm reg}(X,\Delta)$.
Let $h'\colon A'\rightarrow (X,\Delta)$ be the finite Galois morphism induced by $N$.
It suffices to show that $A'$ is an abelian  variety.
We have ${h'}^*(K_X+\Delta)=K_{A'}+E$ where $E$ is an effective divisor.
On the other hand, we have a commutative diagram of finite morphisms:
\[
\xymatrix{
A'\ar[d]_-{\phi}\ar[rd]^-{h'} & \\
A \ar[r]^-{h} & (X,\Delta).\\
}
\]
By construction, we know that 
$h^*(K_X+\Delta)=K_A$, so 
\[
K_{A'}+E =
\phi^*(K_X+\Delta)=\phi^*K_A = K_{A'}-R, 
\]
where $R$ is the ramification divisor of $\phi$.
We conclude that $E+R=0$, where both $E$ and $R$ are effective, so $E=R=0$ and $\phi$ is unramified in codimension one.
Since $A$ is smooth, this turns out to imply that $\phi$ is an \'etale morphism.
A finite \'etale morphism to an abelian  variety is an isogeny so $A'$ is an abelian  variety.
We conclude that $(X,\Delta)$ is a finite quotient of an abelian  variety.
\end{proof} 

\section{Examples and Questions}

In this section, we provide some examples related to the main theorems of the paper and questions for further research.
The first example is the most well-known in this direction.

\begin{example}\label{ex1}
{\em  
Consider the endomorphism
\begin{align*}
f_m: (\pp^n,H_0+\dots+H_n) 
&\rightarrow 
(\pp^n,H_0+\dots+H_n)\\ 
f_m([x_0:\dots:x_n])&=
[x_0^m:\dots:x_n^m].
\end{align*} 
Then, $f_m$ is a polarized endomorphism as 
$f_m^*H_i=mH_i$ for every hyperplane coordinate. 
By Riemann-Hurwitz, we have 
\[
f_m^*\left(K_{\pp^n}+\sum_{i=0}^n\left(1-\frac{1}{m}\right)H_i\right) = K_{\pp^n}. 
\]
Thus, we conclude that 
\[
f_m^*\left( K_{\pp^n}+\sum_{i=0}^n H_i\right) = K_{\pp^n}+\sum_{i=0}^n H_i.
\]
Thus, the log Calabi--Yau pair $(\pp^n,H_0+\dots+H_n)$ admits a polarized endomorphism.
This gives an example of Theorem~\ref{introthm1}.

The previous example can be generalized in the following way.
For every $n$-dimensional projective toric variety $X$, we consider the log Calabi-Yau pair $(X,\Delta)$ where $\Delta$ is the reduced torus invariant divisor.
Consider the polarized endomorphism 
\[
f_m\colon X \rightarrow X
\]
corresponding to the inclusion of lattices $m\zz^n \subseteq \zz^n$
in the fan $\Sigma$ of $X$. 
It is clear that $f_m^*\Delta=m\Delta$
so we have $f_m^*(K_X+\Delta)=K_X+\Delta$ as above.
If $X$ is projective, then it admits a torus invariant ample divisor $H$, for which $f_m^*H=mH$.
The previous statement implies that $f_m\colon (X,\Delta)\rightarrow (X,\Delta)$ is a polarized endomorphism.
}
\end{example} 

The second example shows that the quotient in Theorem~\ref{introthm1} is necessary.

\begin{example}\label{ex2}
{\em 
Consider the pair $(\pp^n,\Delta')$ where $\Delta'$ is the reduced torus invariant boundary divisor and $n\geq 3$, which admits a polarized endomorphism $f_m$ as in Example~\ref{ex1}.
Let $S_{n+1}$ be acting on $\pp^n$ by permutations of the components.
Let $(X,\Delta):=(\pp^n/S_{n+1},\Delta'/S_{n+1})$.
The variety $X$ is a klt Fano variety
and 
$K_X+\Delta\sim 0$.
Further, $X$ is not a toric variety as
$\pi_1(X^{\reg}\setminus \Delta)$ surjects onto $S_{n+1}$, 
while the smooth locus of a toric variety contains $\mathbb{G}_m^n$ so its fundamental group is abelian.

The polarized endomorphism $f_m$ descends to an endomorphism $g_m$ of $(X,\Delta)$ as follows:
\[
\xymatrix{
(\pp^n,\Delta')\ar[d]^-{/S_{n+1}} \ar[r]^-{f_m} & 
(\pp^n,\Delta')\ar[d]^-{/S_{n+1}} \\ 
(X,\Delta) \ar[r]^-{g_m} &(X,\Delta).
}
\]
By construction, we have $g_m^*(K_X+\Delta)=K_X+\Delta.$ 
Since $q^*\Delta=\Delta'$ and $f_m^*\Delta'=m\Delta',$ we have
\[ g_m^*\Delta=q_*f_m^*q^*\Delta=q_*f_m^*\Delta'=q_*(m\Delta')=m\Delta. \]
Hence, $g_m$ is a polarized endomorphism.

This example is based on~\cite{KX09arxiv}. In fact, as is referred in that article, in~\cite{Saltman84} one can find an example of a nonabelian subgroup $G\leq S_{|G|+1}$ of order $p^9$, for some prime number $p$, for which the quotient $\pp^{n}/G$ is not rational, and hence not toric.
}
\end{example} 

\begin{remark}\label{rem:comp}
{\em 
In~\cite[Corollary 1.4]{MZ19}, Meng and Zhang show that 
if $X$ is a smooth rationally connected 
and $D$ a reduced divisor that is 
$f^{-1}$-invariant for a polarized endomorphism $f$,
where $f|_{X\setminus D}$ is \'etale ,
then $(X,D)$ is a toric pair. As in Example~\ref{ex2}, 
for $n\geq 3$,
$(X,D)$ is not a toric pair and it is smooth in codimension 2,
with at worst canonical singularities,
we see that either $f|_{X\setminus D}$ must be \'etale, or the smoothness condition is essential for \cite[Corollary 1.4]{MZ19}.
}
\end{remark}

The third example is related to Theorem~\ref{introthm2}.

\begin{example}\label{ex3}
{\em 
Let $A$ be an abelian  variety of dimension $n$ with corresponding rank $2n$ lattice $\Lambda\subset \cc^{n}.$ 
For each integer $m$, the map $g_m\colon \cc^n\rightarrow \cc^n$
given by $z\mapsto mz$ preserves the lattice, so it induces an endomorphism
\begin{equation}\label{eq:abelian -polarized-endomorphism}
\begin{split}
f_m: A 
&\rightarrow 
A\\ 
f_m(a)&=
ma
\end{split} 
\end{equation}
which is polarized when $m\neq0$ since $f_m^*H\sim m^2H$ for every symmetric ample divisor $H$. 
}
\end{example}

Finally, the fourth example shows that the finite quotient in Theorem~\ref{introthm2} is necessary.

\begin{example}\label{ex4}
{\em 
Let $E$ be an elliptic curve with corresponding lattice $\Lambda\subset \cc.$ Consider its quotient by the involution map $f_{-1}$ from (\ref{eq:abelian -polarized-endomorphism}), which gives
\[q:E \xrightarrow{/f_{-1}} (\pp_1,\Delta),\]
where $\Delta=\frac{1}{2}\left\{0\right\}+\frac{1}{2}\left\{1\right\}+\frac{1}{2}\left\{\lambda\right\}+\frac{1}{2}\left\{\infty\right\}.$
The polarized endomorphism $f_m$ from (\ref{eq:abelian -polarized-endomorphism}) descends to an endomorphism $g_m$ of $(\pp^1,\Delta)$ as follows:
\[
\xymatrix{
E\ar[d]^-{q} \ar[r]^-{f_m} & 
E\ar[d]^-{q} \\ 
(\pp^1,\Delta) \ar[r]^-{g_m} &(\pp^1,\Delta).
}
\] 
From the commutativity of the diagram, it follows that 
$g_m^*(K_{\pp^1}+\Delta)=K_{\pp^1}+\Delta$ and $g_m^*\Delta \sim m^2 \Delta$, so 
$g_m$ is a polarized endomorphism of the pair $(\pp^1,\Delta)$.
}
\end{example} 

The following example shows that log canonical varieties
with polarized endomorphisms may not be finite quotients
of toric fibrations over abelian varieties. Thus, in the folklore conjecture, it is necessary to impose that the variety has klt singularities.

\begin{example}\label{ex:cone-over-elliptic}
{\em 
Consider the projectivized cone $X$ over an elliptic curve $E$, which admits polarized endomorphisms induced by polarized endomorphisms on $E$ as in Example~\ref{ex3}.
At the cone vertex, $X$ has a singularity that is log canonical but not klt type. 
However, $X$ is not a finite quotient of a toric fibration over an abelian variety since such varieties must be klt type. 
}
\end{example}

Two natural questions emanate from this article.

\begin{question}
Let $X$ be a Fano variety and $\Delta$ be a complement. 
Assume that $(X,\Delta)$ admits a polarized endomorphisms.
Is $(X,\Delta)$ a finite quotient of a log Calabi--Yau toric fibration over an abelian variety? 
\end{question}

Our techniques so far can only prove
the previous statement when $K_X+\Delta\sim_\qq 0$
with $\Delta$ reduced, or
when $(X,\Delta)$ is klt log Calabi--Yau with standard coefficients.
We expect that the previous question has a positive answer, but it goes beyond the scope of this article. Some new ideas are required to conclude. 
This question will be addressed in an upcoming paper by the authors.
Let's say that a polarized endomorphism $f\colon X \rightarrow X$ admits a {\em complement} if there exists a complement $(X,\Delta)$ for which $f^*(K_X+\Delta)=K_X+\Delta$.
Not every polarized endomorphism admits a complement.
The hardest task seems to be the following:

\begin{question}
Let $X$ be a normal projective variety admitting a polarized endomorphism $f$.
Can we construct a polarized endomorphism on $X$ that admits a complement?  
\end{question}

We expect that a positive answer to the previous question would settle the main conjecture on the topic. 
However, nowadays, it is not even clear how to construct more polarized endomorphisms from a given one.

\bibliographystyle{habbvr}
\bibliography{references}

\end{document}